\documentclass[10pt]{amsart}
\oddsidemargin=8pt
\evensidemargin=8pt
\textheight=582pt
\textwidth=450pt

\usepackage{fancyhdr}
\usepackage{stmaryrd}
\usepackage{calrsfs}

\pagestyle{fancy}
\fancyhf{}

\fancyhead[CE]{\fontsize{9}{11}\selectfont F. GOTTI AND J. VULAKH}
\fancyhead[CO]{\fontsize{9}{11}\selectfont ON THE ATOMIC STRUCTURE OF TORSION-FREE MONOIDS}
\fancyhead[LE,RO]{\thepage}


\usepackage{amsmath}
\usepackage[nospace,noadjust]{cite}
\usepackage{amsfonts}
\usepackage{amssymb,enumerate}
\usepackage{amsthm}
\usepackage{cite}
\usepackage{comment}
\usepackage{color}
\usepackage[all]{xy}
\usepackage{hyperref}
\usepackage{lineno}
\usepackage{tikz-cd}


\newtheorem*{maintheorem*}{Main Theorem}
\newtheorem{theorem}{Theorem}[section]
\newtheorem{prop}[theorem]{Proposition}
\newtheorem{question}[theorem]{Question}

\theoremstyle{definition}
\newtheorem{definition}[theorem]{Definition}
\newtheorem{remark}[theorem]{Remark}
\newtheorem{example}[theorem]{Example}
\numberwithin{equation}{section}


\newcommand{\nn}{\mathbb{N}}
\newcommand{\pp}{\mathbb{P}}
\newcommand{\qq}{\mathbb{Q}}
\newcommand{\rr}{\mathbb{R}}
\newcommand{\zz}{\mathbb{Z}}

\newcommand{\gp}{\text{gp}}

\newcommand{\uu}{\mathcal{U}}

\providecommand\ldb{\llbracket}
\providecommand\rdb{\rrbracket}

\hypersetup{
	pdftitle={FD},
	colorlinks=true,
	linkcolor=blue,
	citecolor=cyan,
	urlcolor=wine
}

\keywords{totally ordered group, positive monoid, ACCP, atomic monoid, ordered field}

\subjclass[2010]{Primary: 20M12, 20M13, 06F05; Secondary: 20M14, 11Y05}


\begin{document}
	
	\mbox{}
	\title{On the atomic structure of torsion-free monoids} 
	
	\author{Felix Gotti}
	\address{Department of Mathematics\\MIT\\Cambridge, MA 02139}
	\email{fgotti@mit.edu}
	
	\author{Joseph Vulakh}
	\address{Paul Laurence Dunbar School, Lexington, KY 40513}
	\email{joseph@vulakh.us}
	
\date{\today}

\begin{abstract}
	 Let $M$ be a cancellative and commutative (additive) monoid. The monoid $M$ is atomic if every non-invertible element can be written as a sum of irreducible elements, which are also called atoms. Also, $M$ satisfies the ascending chain condition on principal ideals (ACCP) if every increasing sequence of principal ideals (under inclusion) becomes constant from one point on. In the first part of this paper, we characterize torsion-free monoids that satisfy the ACCP as those torsion-free monoids whose submonoids are all atomic. A submonoid of the nonnegative cone of a totally ordered abelian group is often called a positive monoid. Every positive monoid is clearly torsion-free. In the second part of this paper, we study the atomic structure of certain classes of positive monoids.
\end{abstract}
\medskip

\maketitle


\bigskip
\section{Introduction}
\label{sec:intro}
\smallskip

A cancellative and commutative additive monoid is called atomic if every non-invertible element can be expressed as a sum of irreducibles, also called atoms. Motivated by the celebrated paper~\cite{AAZ90} by Anderson, Anderson and Zafrullah, the property of being atomic has received a great deal of attention in the literature during the last three decades. Although in~\cite{AAZ90} the authors only considered atomicity in the context of integral domains, the same notion has been significantly extended to and explored in a variety of different contexts. Perhaps the first influential generalization is due to Halter-Koch, who generalized in~\cite{fHK92} to the more general context of cancellative and commutative monoids some atomic notions introduced in~\cite{AAZ90} for integral domains. Although it goes outside the scope of this paper, it is worth emphasizing that atomicity has also been studied in non-cancellative and non-commutative algebraic structures, including commutative rings with nonzero zero-divisors~\cite{EJ21,JM22}, non-commutative monoids \cite{BG20,BS15}, and even more general scenarios~\cite{CT22,CT23}.
\smallskip

Even before the nineties, atomicity was sporadically studied in the context of integral domains, mainly in connection to the ACCP. A cancellative and commutative monoid (or an integral domain) satisfies the ascending chain condition on principal ideals (ACCP) if every increasing sequence of principal ideals (under inclusion) becomes constant from one point on. The first example of an atomic domain not satisfying the ACCP was constructed by Grams in~\cite{aG74}, correcting Cohn's misled assertion that being atomic and satisfying the ACCP were equivalent conditions in the context of integral domains. Later in the eighties, Zaks constructed in~\cite{aZ82} two more examples of atomic domains that do not satisfy the ACCP (one of them was suggested by Cohn). Further examples of atomic domains that do not satisfy the ACCP have been constructed since then (see \cite{mR93,BC19,GL22,GL23}), although none of these examples has a trivial construction.
\smallskip

A cancellative and commutative monoid $M$ is called hereditarily atomic provided that every submonoid of $M$ is atomic. Hereditary atomicity in the context of integral domains was recently studied by Coykendall, Hasenauer, and the first author in~\cite{CGH23}. In this paper, we use hereditary atomicity to characterize the ACCP property in the context of reduced torsion-free monoids (a monoid is called reduced if its group of invertible elements is trivial, while it is called torsion-free if its difference group is torsion-free). Indeed, the fundamental result we establish in this paper, Theorem~\ref{thm:ACCP=HA}, states that a reduced torsion-free monoid satisfies the ACCP if and only if it is hereditarily atomic. It is unknown to the authors whether these two conditions are equivalent after dropping the torsion-free condition. To motivate further research we leave this as an open question at the end of Section~\ref{sec:ACCP and HA}.
\smallskip

A positive monoid is a submonoid of the nonnegative cone of a totally ordered abelian group. Positive monoids form a special class of torsion-free monoids. In Section~\ref{sec:positive monoids}, we explore various atomic aspects of certain classes of positive monoids. We consider properties stronger than atomicity, including the bounded and the finite factorization properties. Both of these properties were introduced in~\cite{AAZ90} to better understand the atomicity of integral domains in the context of the methodological diagram illustrated in Figure~\ref{fig:AAZ chain fragment},
\begin{figure}[h]
	\begin{tikzcd}
		\textbf{ UFM } \ \arrow[r, Rightarrow] & \ \textbf{ FFM } \arrow[r, Rightarrow] & \ \textbf{ BFM } \arrow[r, Rightarrow] & \ \textbf{ ACCP }
	\end{tikzcd}
	\caption{A monoid adaptation of a fragment of the diagram introduced in 1990 by Anderson, Anderson, and Zafrullah to study factorizations in the context of integral domains.}
	\label{fig:AAZ chain fragment}
\end{figure}
where UFM (resp., FFM and BFM) stands for unique factorization monoid (resp., finite factorization monoid and bounded factorization monoid). We also consider properties weaker than atomicity, including the property of being almost atomic and that of being quasi-atomic, both introduced by Boynton and Coykendall in~\cite{BC15} to study divisibility in integral domains, as well as the property of being nearly atomic, recently introduced by Lebowitz-Lockard in \cite{nLL19}. These properties also form a nested diagram of atomic classes: this is illustrated in Figure~\ref{fig:subatomic properties}, where the non-standard acronym ATM (resp., NAM, AAM, QAM) stands for atomic monoid (resp., nearly atomic monoid, almost atomic monoid, quasi-atomic monoid).
\begin{figure}[h]
	\begin{tikzcd}
		\textbf{ ATM } \ \arrow[r, Rightarrow] & \ \textbf{ NAM } \arrow[r, Rightarrow] & \ \textbf{ AAM } \arrow[r, Rightarrow] & \ \textbf{ QAM }
	\end{tikzcd}
	\caption{A chain of implications extending to the right that in Figure~\ref{fig:AAZ chain fragment} and consisting of properties weaker than being atomic.}
	\label{fig:subatomic properties}
\end{figure}
Our primary purpose in Section~\ref{sec:positive monoids} is to consider the atomic structure of certain classes of positive monoids that are often useful to construct needed (counter)examples in factorization theory.

\bigskip
\section{Background}
\label{sec:background}

Following usual conventions, we let $\zz$, $\qq$, and $\rr$ denote the set of integers, rational numbers, and real numbers, respectively. We let $\nn$ and $\nn_0$ denote the set of positive and nonnegative integers, respectively. In addition, we let $\pp$ denote the set of primes. For $b,c \in \zz$ with $b \le c$, we let $\ldb b, c \rdb$ denote the set of integers between $b$ and $c$; that is, $\ldb b,c \rdb := \{m \in \zz \mid b \le m \le c\}$. In addition, for $S \subseteq \rr$ and $r \in \rr$, we set $S_{\ge r} = \{s \in S \mid s \ge r\}$ and $S_{> r} = \{s \in S \mid s > r\}$. For $q \in \qq_{>0}$,  the relatively prime positive integers $n$ and $d$ satisfying that $q = \frac nd$ are denoted here by $\mathsf{n}(q)$ and $\mathsf{d}(q)$, respectively.

\smallskip
\subsection{Atomic Notions in Monoids}
\smallskip

A \emph{monoid} is a semigroup with an identity element. However, in the context of this paper we will tacitly assume that monoids are both cancellative and commutative. Let $M$ be a monoid written additively. We set $M^\bullet = M \setminus \{0\}$, and we say that $M$ is \emph{trivial} if $M^\bullet$ is empty. The invertible elements of $M$ form a group, which we denote by $\uu(M)$, and $M$ is called \emph{reduced} if $\uu(M)$ is the trivial group. The \emph{difference group} $\gp(M)$ of $M$ is the unique abelian group $\gp(M)$ up to isomorphism satisfying that any abelian group containing a homomorphic image of $M$ also contains a homomorphic image of $\gp(M)$. The \emph{rank} of $M$ is, by definition, the rank of $\gp(M)$ as a $\zz$-module or, equivalently, the dimension of the vector space $\qq \otimes_\zz \gp(M)$. The rank of $M$ is denoted by $\text{rank} \, M$. The \emph{reduced monoid} of $M$ is the quotient $M/\uu(M)$, which is denoted by $M_{\text{red}}$. For $b,c \in M$, we say that $c$ \emph{divides} $b$ \emph{in} $M$ if there exists $d \in M$ such that $b = c + d$; in this case, we write $c \mid_M b$. A submonoid~$M'$ of $M$ is called a \emph{divisor-closed submonoid} if every element of~$M$ dividing an element of~$M'$ in $M$ belongs to $M'$. If $S$ is a subset of~$M$, then we let $\langle S \rangle$ denote the smallest submonoid of $M$ containing $S$, in which case, we say that $S$ is a \emph{generating set} of $\langle S \rangle$. The monoid $M$ is called \emph{finitely generated} provided that $M = \langle S \rangle$ for some finite subset $S$ of $M$. 
\smallskip

An non-invertible element $a \in M$ is called an \emph{atom} of $M$ if whenever $a = b+c$ for some $b,c \in M$, either $b \in \uu(M)$ or $c \in \uu(M)$. We let $\mathcal{A}(M)$ denote the set consisting of all the atoms of $M$. If $\mathcal{A}(M)$ is empty, $M$ is called \emph{antimatter}. Observe that if $M$ is a reduced monoid, then $\mathcal{A}(M)$ is contained in every generating set of~$M$. An element of~$M$ is called \emph{atomic} if it is invertible or it can be written as a sum of finitely many atoms. The monoid $M$ is called \emph{atomic} if every element of $M$ is atomic. Following~\cite{AAZ90}, we say that $M$ is \emph{strongly atomic} if for any elements $b,c \in M$, there exists an atomic element $d \in M$ that is a common divisor of $b$ and $c$ such that the only common divisors of $b-d$ and $c-d$ are invertible elements. It follows from the definitions that every strongly atomic monoid is atomic, and it is well known that every monoid satisfying the ACCP is strongly atomic (see \cite[Theorem~1.3]{AAZ90}).
\smallskip

We proceed to introduce some properties that are weaker than that of being atomic. An element $c \in M$ is called \emph{quasi-atomic} (resp., \emph{almost atomic}) provided that there exists an element (resp., an atomic element) $b \in M$ such that $b + c$ is atomic. Following~\cite{BC15}, we say that $M$ is \emph{quasi-atomic} (resp., \emph{almost atomic}) if every non-invertible element of $M$ is quasi-atomic (resp., almost atomic). It follows directly from the definitions that every almost atomic monoid is quasi-atomic. Following~\cite{nLL19}, we say that $M$ is \emph{nearly atomic} if there exists $b \in M$ such that for each non-invertible element $c \in M$, the element $b + c$ is atomic. It follows directly from the definitions that every atomic monoid is nearly atomic, and one can prove mimicking the proof of \cite[Lemma~5]{nLL19} that every nearly atomic monoid is almost atomic. Therefore the properties of being nearly atomic, almost atomic, and quasi-atomic are nested weaker notions of atomicity.
\smallskip 

A subset $I$ of $M$ is called an \emph{ideal} of~$M$ if the set $I + M := \{b+c \mid b \in I \text{ and } c \in M\} \subseteq I$ or, equivalently, if $I + M = I$. If~$I$ is an ideal of~$M$ such that $I = b + M := \{b + c \mid c \in M\}$ for some $b \in M$, then $I$ is called \emph{principal}. The monoid $M$ satisfies the \emph{ascending chain condition on principal ideals} (ACCP) provided that every ascending chain $(b_n + M)_{n \in \nn}$ of principal ideals of $M$ stabilizes; that is, there exists $n_0 \in \nn$ such that $b_n + M = b_{n+1} + M$ for every $n \ge n_0$. It is not hard to check that if $M$ satisfies the ACCP, then $M$ is atomic (see \cite[Proposition~1.1.4]{GH06}). As we have mentioned in the introduction, the converse of this statement does not hold.
\smallskip

The free (commutative) monoid on $\mathcal{A}(M_{\text{red}})$ is denoted by $\mathsf{Z}(M)$. Let $\pi \colon \mathsf{Z}(M) \to M_\text{red}$ be the unique monoid homomorphism fixing every element of $\mathcal{A}(M_{\text{red}})$. If $z := a_1 \cdots a_\ell \in \mathsf{Z}(M)$ for some $a_1, \dots, a_\ell \in \mathcal{A}(M_{\text{red}})$, then $\ell$ is called the \emph{length} of $z$ and is denoted by $|z|$. For every $b \in M$, we set
\[
	\mathsf{Z}(b) := \mathsf{Z}_M(b) := \pi^{-1} (b + \uu(M)),
\]
and the elements of $\mathsf{Z}(b)$ are called \emph{factorizations} of~$b$. A recent survey on factorization theory in commutative monoids by Geroldinger and Zhong can be found in~\cite{GZ20}. If $|\mathsf{Z}(b)| = 1$ for every $b \in M$, then~$M$ is called a \emph{unique factorization monoid} (UFM). On the other hand, if $M$ is atomic and $|\mathsf{Z}(b)| < \infty$ for every $b \in M$, then~$M$ is called a \emph{finite factorization monoid} (FFM). It follows directly from the definitions that every UFM is an FFM. In addition, it follows from \cite[Proposition~2.7.8]{GH06} that every finitely generated monoid is an FFM. Now, for every $b \in M$, we set
\[
	\mathsf{L}(b) := \mathsf{L}_M(b) := \{ |z| \mid z \in \mathsf{Z}(b) \}.
\]
If $M$ is atomic and $|\mathsf{L}(b)| < \infty$ for every $b \in M$, then $M$ is called a \emph{bounded factorization monoid} (BFM). According to \cite[Theorem~1]{fHK92}, the monoid $M$ is a BFM if it admits a \emph{length function}, that is, a function $\ell \colon M \to \nn_0$ satisfying the following two properties:
\begin{enumerate}
	\item[(i)] $\ell(u) = 0$ if and only if $u \in \mathcal{U}(M)$;
	\smallskip
	
	\item[(ii)] $\ell(b+c) \ge \ell(b) + \ell(c)$ for all $b,c \in M$.
\end{enumerate}
It follows from the definitions that every FFM is a BFM. Also, it is not hard to argue that every BFM must satisfy the ACCP (see \cite[Corollary~1.4.4]{GH06}). A recent survey on the bounded factorization and finite factorization properties by Anderson and the first author can be found in~\cite{AG22}. The monoid $M$ is called a \emph{length-factorial monoid} (LFM) provided that $M$ is atomic and also that any two distinct factorizations of the same element have different lengths. The notion of length-factoriality was introduced by Coykendall and Smith in~\cite{CS11} under the term `other-half-factoriality'. The same notion has been considered recently by Chapman et al. in~\cite{CCGS21} and by Geroldinger and Zhong in~\cite{GZ21}. It follows directly from the definitions that every UFM is an LFM, and it follows from \cite[Proposition~3.1]{BVZ22} that every LFM is an FFM.
\smallskip

The classes of monoids we have introduced in this subsection are represented in the chain of implications illustrated in Figure~\ref{fig:full atomicity diagram}, which consists of nested classes of monoids and extends simultaneously the diagrams previously shown in Figure~\ref{fig:AAZ chain fragment} and Figure~\ref{fig:subatomic properties}. For the sake of consistency, in the diagram shown in Figure~\ref{fig:full atomicity diagram}, we let the (nonstandard) acronym SAM (resp., ATM, NAM, AAM, and QAM) stand for strongly atomic monoid (resp., atomic monoid, nearly atomic monoid, almost atomic monoid, and quasi-atomic monoid).
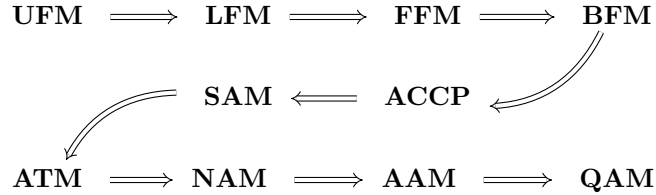
\begin{figure}[h]
	\begin{tikzcd}[cramped]
		\textbf{ UFM } \ \arrow[r, Rightarrow] & \ \textbf{ LFM } \arrow[r, Rightarrow] & \ \textbf{ FFM } \arrow[r, Rightarrow] & \ \textbf{ BFM } \arrow[ld, Rightarrow, bend left] \\ 
		&\ \textbf{ SAM }  \arrow[ld, Rightarrow, bend right] &\ \textbf{ ACCP } \arrow[l, Rightarrow] & \\
		\textbf{ ATM } \ \arrow[r, Rightarrow] & \textbf{ NAM } \ \arrow[r, Rightarrow]	& \textbf{ AAM } \ \arrow[r, Rightarrow] & \ \textbf{ QAM }
	\end{tikzcd}
	\caption{A chain of implications connecting those in Figure~\ref{fig:AAZ chain fragment} and Figure~\ref{fig:subatomic properties} through the property of being strongly atomic.}
	\label{fig:full atomicity diagram}
\end{figure}

\medskip
\subsection{Totally Ordered Groups}
\label{subsec:ordered fields}

We proceed to review some basic notions related to totally ordered abelian groups that we will be using later. A \emph{totally ordered abelian group} is a pair $(G, G^+)$, where $G$ is an abelian (additive) group and $G^+$ is a subset of $G$ containing~$0$ and satisfying the following two conditions:
\begin{enumerate}
	\item for all $g,h \in G^+$,  $g+h \in G^+$, and
	\smallskip
	
	\item for each $g \in G \! \setminus \! \{0\}$, exactly one of the conditions $g \in G^+$ and $-g \in G^+$ holds.
\end{enumerate}
In this case, $G^+$ is called a \emph{positive cone} of $G$, and $G^+$ induces a total order on $G$, namely, $g \le h$ in~$G$ whenever $h-g \in G^+$. On the other hand, if $G$ is an abelian group and $\le$ is a total order on $G$ compatible with the operation of $G$, then $\{g \in G \mid g \ge 0\}$ is a positive cone of $G$ inducing the order~$\le$. We can see that every totally ordered group is torsion-free. When there seems to be no risk of ambiguity, we will often abuse notation, writing $G$ instead of the more cumbersome notation $(G, G^+)$.
\smallskip

Let $G$ be a totally ordered abelian group. For each $g \in G$ set $|g| = g$ if $g \in G^+$ and $|g| = -g$ otherwise. For $g,h \in G$, we write $g = \textbf{O}(h)$ if $|g| \le n|h|$ for some $n \in \nn$, and $g \sim h$ if both $g = \textbf{O}(h)$ and $h = \textbf{O}(g)$ hold. Also, for any $g,h \in G$ with $g = \textbf{O}(h)$ and $g \not\sim h$, we write $g \ll h$ or, equivalently, $h \gg g$. It is clear that $\sim$ defines an equivalence relation on $G$. Let
\[
	v \colon G \setminus \{0\} \to \Gamma_G := (G \setminus \{0\})/\!\sim
\]
be the quotient map. Setting $v(g) \preceq v(h)$ for any $g,h \in G \setminus \{0\}$ with $h = \textbf{O}(g)$, one finds that $(\Gamma_G, \preceq)$ is a well-defined totally ordered set, which is called the \emph{value set} of $G$. 
One can verify that for all $g,h \in G$ such that $0 \notin \{g,h, g+h\}$,
\[
	v(g+h) \ge \min\{v(g), v(h)\},
\]
where the equality holds provided that $v(g) \neq v(h)$ or $g,h \in G^+$. The elements of $\Gamma_G$ are called \emph{Archimedean classes} of $G$, and the quotient map $v$ is called the \emph{Archimedean valuation} on $G$. The totally ordered group $G$ is called \emph{Archimedean} if and only if its value set $\Gamma_G$ is a singleton.
\smallskip

A \emph{positive monoid of a totally ordered group} $G$ is a submonoid of $G$ contained in $G^+$. A monoid~$M$ is called a \emph{positive monoid} if it is isomorphic to a positive monoid of a totally ordered group $G$. According to H\"older's theorem, a totally ordered abelian group is Archimedean if and only if it is order-isomorphic to a subgroup of the additive group $\rr$. Thus, additive submonoids of $\rr_{\ge 0}$ account, up to isomorphism, for all positive monoids of Archimedean groups. In addition, nontrivial additive submonoids of $\qq_{\ge 0}$, also known as \emph{Puiseux monoids}, account up to isomorphism for all the rank-one positive monoids (see \cite[Theorem~3.12]{GGT21} and \cite[Section~24]{lF70}). Cofinite submonoids of the additive monoid $\nn_0$ are called \emph{numerical monoids}. Every numerical monoid is then a Puiseux monoid, and it is well known that a Puiseux monoid is finitely generated if and only if it is isomorphic to a numerical monoid. The atomic structure and arithmetic of factorizations of Puiseux monoids have been systematically studied during the last half-decade (see~\cite{GGT21} and references therein).

\bigskip
\section{Hereditary Atomicity and the ACCP}
\label{sec:ACCP and HA}

It turns out that in the class consisting of reduced torsion-free monoids, being hereditarily atomic and satisfying the ACCP are equivalent conditions.

\begin{theorem} \label{thm:ACCP=HA}
	For a reduced torsion-free monoid $M$, the following conditions are equivalent.
	\begin{enumerate}
		\item[(a)] $M$ satisfies the ACCP.
		\smallskip
		
		\item[(b)] $M$ is hereditarily atomic.
	\end{enumerate}
\end{theorem}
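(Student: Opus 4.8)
The plan is to prove the two implications separately, with the harder direction being (a) $\Rightarrow$ (b).

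\textbf{The direction (b) $\Rightarrow$ (a).} I would argue by contrapositive: suppose $M$ fails the ACCP, so there is a strictly ascending chain of principal ideals $(b_n + M)_{n \in \nn}$. Writing $b_n = b_{n+1} + c_n$ with $c_n \in M \setminus \uu(M) = M^\bullet$ (using that $M$ is reduced), I would consider the submonoid $M' := \langle c_n : n \in \nn \rangle$ of $M$ and show that $b_1$ (more precisely, the element $\sum_{i=1}^{k} c_i$ for suitable $k$) admits no factorization into atoms inside $M'$, because any element of $M'$ that divides $b_1$ in $M'$ can be refined further by the tail of the chain. One needs to check that $M'$ is not atomic: the key point is that no $c_n$ need be an atom of $M'$, and an infinite-descent argument shows $b_1$ cannot be written as a finite sum of atoms of $M'$. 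This witnesses that $M$ is not hereditarily atomic. Torsion-freeness is not needed for this direction, only that $M$ is reduced.

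\textbf{The direction (a) $\Rightarrow$ (b).} This is where torsion-freeness is essential. Let $M$ satisfy the ACCP and let $M'$ be a submonoid; I want $M'$ atomic. The natural idea is that ACCP passes to the difference group and one can try to order things: since $M$ is reduced and torsion-free, $\gp(M)$ is a torsion-free abelian group, hence its divisible hull is a $\qq$-vector space, and a torsion-free abelian group admits a compatible total order. The plan is to fix a total order $\le$ on $\gp(M)$ (equivalently a positive cone) and then argue that, although $M'$ need not sit inside the positive cone, one can still extract a length/descent argument. Concretely: take $b \in M' \setminus \uu(M')$; I want to write $b$ as a sum of atoms of $M'$. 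If $b$ is not an atom of $M'$, write $b = b_1 + c_1$ with $b_1, c_1 \in M' \setminus \uu(M') \subseteq M \setminus \uu(M)$; then $b + M \subsetneq b_1 + M$ in $M$ (strict because $c_1 \notin \uu(M)$ and $M$ is reduced). Iterating, and using the ACCP of $M$ to forbid infinite strictly ascending chains, one forces the process to terminate, giving $b$ as a finite sum of atoms of $M'$. The subtlety is that a single "split off one summand" step does not obviously yield a chain in $M$ that must stabilize, because splitting $b = b_1 + c_1$ and then $b_1 = b_2 + c_2$ does give $b + M \subseteq b_1 + M \subseteq b_2 + M$, which is fine — but one must ensure \emph{every} branch of the factorization tree of $b$ in $M'$ is finite, i.e. that the tree has no infinite path. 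An infinite path would produce an infinite strictly ascending chain $b + M \subsetneq b_1 + M \subsetneq b_2 + M \subsetneq \cdots$ in $M$, contradicting the ACCP; so every path is finite, and König's lemma is not even needed since we only need each individual path to terminate to conclude $b$ is a sum of atoms.

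\textbf{Where torsion-freeness really enters.} Re-examining the above, the ACCP argument for (a) $\Rightarrow$ (b) as I sketched it seems to work for \emph{any} reduced monoid, which would contradict the remark in the introduction that the equivalence is open without torsion-freeness. So the real content must be subtler: the issue is whether $M' \setminus \uu(M') \subseteq M \setminus \uu(M)$, which is automatic since $\uu(M') \subseteq \uu(M)$ and $M$ reduced gives $\uu(M) = \{0\}$ — hmm, so that is fine too. I expect the genuine obstacle is that an element of $M'$ could be an atom of $M'$ without being "small" in any sense, and more importantly that the naive argument conflates \emph{atomicity of $M'$} with \emph{every element of $M'$ being a finite sum of $M$-irreducible pieces lying in $M'$}; controlling that the pieces $c_i$ can be chosen in $M'$ at every step, while simultaneously keeping the $M$-chain strictly ascending, is the crux. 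The plan is therefore to prove first the auxiliary fact that in a reduced torsion-free monoid the ACCP descends to every submonoid — perhaps via passing to $\gp(M)$, choosing a total order, and using that the ACCP for reduced monoids is equivalent to the absence of infinite strictly $\mid_M$-descending sequences — and then conclude atomicity of $M'$ from "$M'$ satisfies the ACCP" via \cite[Proposition~1.1.4]{GH06}. I anticipate the main difficulty is exactly the lemma "ACCP passes to submonoids of reduced torsion-free monoids," and that this is where the total order on $\gp(M)$ furnished by torsion-freeness is used to linearize an otherwise branching divisibility argument.
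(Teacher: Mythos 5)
Your proposal has the two implications exactly backwards in difficulty, and the direction you dispose of in a few sentences is the one that carries essentially all the content of the theorem. In the paper, (a) $\Rightarrow$ (b) is the easy implication: since $M$ is reduced, a non-stabilizing chain $(b_n + M')_{n \in \nn}$ in a submonoid $M'$ gives $b_n = b_{n+1} + c_n$ with $c_n \neq 0$, and cancellativity together with $\uu(M) = \{0\}$ makes $(b_n + M)_{n \in \nn}$ a non-stabilizing chain in $M$; so the ACCP passes to every submonoid of a reduced monoid, with no torsion-freeness, no total order, and no subtlety about "choosing the pieces in $M'$." Atomicity of $M'$ then follows from the standard fact that the ACCP implies atomicity. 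Your closing paragraph hunts for a difficulty in this direction that is not there (and, as a minor point, your "every path terminates" argument does need either K\H{o}nig's lemma or the usual maximality-of-$b+M$ argument, since a finitely branching tree with all paths finite is finite only by such a lemma).

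The genuine gap is in your (b) $\Rightarrow$ (a). The submonoid $M' = \langle c_n \mid n \in \nn \rangle$ has no reason to fail atomicity: each $c_n$ may perfectly well be an atom of $M'$, in which case every element of $M'$ --- in particular $\sum_{i=1}^{k} c_i$ --- is a sum of atoms. The "refinement by the tail of the chain" refines the elements $b_n$, which need not lie in $M'$ at all; and even if you enlarge to $\langle b_n, c_n \mid n \in \nn\rangle$ as the paper does, atomicity only forces each $b_n$ to be a non-atom and hence $b_1 \in \langle c_n \mid n \in \nn\rangle$, which is a useful conclusion but not a contradiction. Your assertion that torsion-freeness is not needed for this direction contradicts the paper's own open question of whether every hereditarily atomic monoid satisfies the ACCP: this is precisely the implication where torsion-freeness is used, and heavily. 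The paper embeds $\gp(M)$ into a totally ordered divisible group via Hahn's embedding theorem and then runs a two-case analysis on the Archimedean classes of the differences $a_n$, repeatedly passing to subsequences and perturbing to a new sequence $(a'_n)$ so that $q_0 \notin \langle a'_1, \dots, a'_n\rangle$ for every $n$ while the associated chain of principal ideals still fails to stabilize; only then does atomicity of $\langle a'_n, r_n \mid n \in \nn\rangle$ yield a contradiction. None of that machinery is present in, or replaceable by, the infinite-descent sketch you give, so the hard half of the theorem remains unproved.
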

	
\begin{proof}
	(a) $\Rightarrow$ (b): Since $M$ is a reduced monoid, the fact that $M$ satisfies the ACCP immediately implies that every submonoid of $M$ satisfies the ACCP. Now the implication follows from the fact that every monoid satisfying the ACCP is atomic.
	\smallskip
	
	(b) $\Rightarrow$ (a): Let $G$ be the difference group of $M$. Since $M$ is torsion-free, $G$ is a torsion-free abelian group. Then it follows from \cite[Theorem~3.2]{rG84} that $G$ can be turned into a totally ordered group $(G, \le)$ in such a way that $M$ is a submonoid of the nonnegative cone $G^+$ of $G$. Because $G$ is totally ordered, it follows from the Hahn's embedding theorem that $G$ can be embedded as an ordered group into the totally ordered group $\mathfrak{F}(\Gamma_G, \rr)$ consisting of all functions from~$\Gamma_G$ to $\rr$ that vanish outside a well-ordered set, where $\Gamma_G$ is the value set of~$G$. In particular, $G^+$ is a submonoid of $\mathfrak{F}(\Gamma_G, \rr)^+$. Since $\mathfrak{F}(\Gamma_G, \rr)$ is a divisible group, after replacing $G$ by $\mathfrak{F}(\Gamma_G, \rr)$, we can assume that $G$ is a totally ordered divisible abelian group and, in particular, $g$ and $g/n$ are in the same Archimedean class of $G$ for all $g \in G$ and $n \in \nn$.
	
	Suppose, by way of contradiction, that $M$ does not satisfy the ACCP. Then there is a sequence of principal ideals $(q_n + M)_{n \in \nn_0}$ satisfying that $a_{n+1} := q_n - q_{n+1} \in M^\bullet$ for every $n \in \nn_0$. We split the rest of the proof into two cases.
	\smallskip
	
	\textsc{Case 1:} There exists an Archimedean class of $G$ containing infinitely many terms of the sequence $(a_n)_{n \in \nn}$. In this case, there is a subsequence $(b_n)_{n \in \nn}$ of $(a_n)_{n \in \nn}$ all whose terms are in the same Archimedean class of $G$. Now set $q'_n := q_0 - \sum_{i=1}^n b_i$ for each $n \in \nn$. The fact that $(q_n + M)_{n \in \nn}$ is a non-stabilizing ascending chain of principal ideals of $M$ implies that $(q'_n + M)_{n \in \nn}$ is also a non-stabilizing ascending chain of principal ideals of $M$. Therefore, after replacing $(q_n + M)_{n \in \nn}$ by $(q'_n + M)_{n \in \nn}$ if necessary, we can assume that all the terms of the sequence $(a_n)_{n \in \nn}$ belong to the same Archimedean class of $G$. Because $M$ is hereditarily atomic, its submonoid $N := \langle a_n, q_n \mid n \in \nn \rangle$ is atomic. Now for each $n \in \nn_0$ the equality $q_n = q_{n+1} + a_{n+1}$ ensures that $q_n \notin \mathcal{A}(N)$. This, along with the fact that $q_0 = q_1 + a_1 \in N$, implies that  $q_0 \in \langle a_n \mid n \in \nn \rangle$. Hence $a_n \sim q_0$ for every $n \in \nn$.
	
	Now observe that, for any $k \in \nn$, the term $a_k$ cannot be a lower bound of the set $\{a_n \mid n > k\}$ as, otherwise, we could take $n \in \nn$ such that $n a_k > q_0$ to obtain that $\sum_{i=1}^{k+n} a_i > \sum_{i = k+1}^{k+n} a_i \ge n a_k > q_0$, which is not possible. Therefore, after replacing $(a_n)_{n \in \nn}$ by a suitable subsequence and redefining $(q_n)_{n \in \nn}$ as we did in the previous paragraph, we can assume that $(a_n)_{n \in \nn}$ is a strictly decreasing sequence. In the same vein, we observe that for any $k \in \nn$, there must exist $\ell \in \nn_{> k}$ such that $a_k - a_\ell \sim q_0$ as otherwise (i.e., $a_k - a_{k+i} \ll q_0$ for every $i \in \nn$), we could take $n \in \nn$ such that $q_0 < n a_k$ to obtain that $q_0 - \sum_{i=k}^{k+n} a_i < -a_k + \sum_{i=k+1}^{k+n} (a_k - a_i) < 0$, which is not possible. Thus, after replacing $(a_n)_{n \in \nn}$ by a suitable subsequence and redefining $(q_n)_{n \in \nn}$ accordingly, we can assume that $a_n - a_{n+1} \sim q_0$ for every $n \in \nn$. Set $s_n := \sum_{i=1}^n a_i$ for each $n \in \nn$.
	
	Our current goal is to construct a sequence $(a'_n)_{n \in \nn}$ with its terms in $M$ such that, for every $n \in \nn$, the following conditions hold:
	\begin{enumerate}
		\item $q_0 \notin \langle a'_1, \dots, a'_n \rangle$,
		\smallskip
		
		\item $q_0 \sim a'_i$ for every $i \in \ldb 1,n \rdb$, and
		\smallskip
		
		\item $s'_n := \sum_{i=1}^n a'_i$ divides $s_m$ in $M$ for some $m \in \nn$.
	\end{enumerate}
	We proceed inductively. Since $q_0 = \textbf{O}(a_1)$, the set $G^+ \cap \big\{ \frac{q_0}n - a_1 \mid n \in \nn \big\}$ is finite. Now the fact that the sequence $(a_n)_{n \in \nn}$ is strictly decreasing ensures the existence of $i \in \nn$ such that $a_i \notin \big\{ \frac{q_0}n - a_1 \mid n \in \nn \big\}$, whence $q_0 \notin \langle a_1 + a_i \rangle$. Setting $a'_1 := a_1 + a_i$, we can see that conditions (1)--(3) above hold for $n=1$. Now suppose that we have already found $a'_1, \dots, a'_n \in M$ such that conditions (1)--(3) hold. Fix $m \in \nn$ such that $s'_n$ divides $s_m$ in $M$. Set $Q := q_0 - \langle a'_1, \dots, a'_n \rangle$. From the fact that $q_0 \sim a'_i$ for every $i \in \ldb 1,n \rdb$, one can infer that the set $G^+ \cap Q$ is finite. Also,  $q = \textbf{O}(a_{m+1})$ for each $q \in G^+ \cap Q$. Therefore the set $G^+ \cap \big\{ \frac{q}n - a_{m+1} \mid q \in Q \text{ and } n \in \nn \big\}$ is also finite. As $(a_n)_{n \in \nn}$ is strictly decreasing, there is a $j \in \nn$ with $j > m+1$ such that $a_j \notin \big\{ \frac qn - a_{m+1} \mid q \in Q \text{ and } n \in \nn \big\}$. Thus, $Q$ is disjoint from $\langle a_{m+1} + a_j \rangle$. So after setting $a'_{n+1} = a_{m+1} + a_j$, we see that $q_0 \notin \langle a'_1, \dots, a'_{n+1} \rangle$ and also that $q_0 \sim a_{m+1} + a_j \sim a'_{n+1}$. In addition, observe that $s'_{n+1} := \sum_{i=1}^{n+1} a'_i = s'_n + a_{m+1} + a_j$ divides $s_m + a_{m+1} + a_j$ in $M$, which implies that $s'_{n+1}$ divides $s_j$ in $M$. Hence we can assume the existence of a sequence $(a'_n)_{n \in \nn}$ satisfying conditions (1)--(3) above.
	
	Finally, let $(r_n)_{n \in \nn}$ be the sequence defined as follows: take $r_0 = q_0$ and take $r_n = q_0 - s'_n$ for every $n \in \nn$. Because $r_n = q_0 - s'_n = q_0 - s'_{n+1} + (s'_{n+1} - s'_n) = r_{n+1} + a'_{n+1}$, we see that $(r_n + M)_{n \in \nn}$ is a non-stabilizing ascending chain of principal ideals of~$M$. Now consider the submonoid $N' := \langle a'_n, r_n \mid n \in \nn \rangle$ of $M$. Since $M$ is hereditarily atomic, $N'$ is atomic. In addition, the fact that $(r_n + M)_{n \in \nn}$ is a non-stabilizing ascending chain of principal ideals implies that $\mathcal{A}(N') \subseteq \{a'_n \mid n \in \nn\}$. This, along with the fact that $q_0 = r_1 + a'_1 \in N'$, guarantees the existence of $c_1, \dots, c_n \in \nn_0$ such that $q_0 = \sum_{i=1}^n c_i a'_i$. However, this contradicts that $q_0 \notin \langle a'_1, \dots, a'_n \rangle$.
	\smallskip
	
	\textsc{Case 2:} Each Archimedean class of $G$ contains only finitely many terms of the sequence $(a_n)_{n \in \nn}$. Then there exists a subsequence $(b_n)_{n \in \nn}$ of $(a_n)_{n \in \nn}$ such that each Archimedean class of $G$ contains at most one term of $(b_n)_{n \in \nn}$ and, after replacing $(q_n)_{n \in \nn}$ by $(q_0 - \sum_{i=1}^n b_i)_{n \in \nn}$, one can assume that each Archimedean class of $F$ contains at most one term of the sequence $(a_n)_{n \in \nn}$.
		
	Because $q_0 > \sum_{i=1}^n a_i$ for every $n \in \nn$, we see that $a_n = \textbf{O}(q_0)$ for every $n \in \nn$. On the other hand, the fact that $(q_n + M)_{n \in \nn}$ is a non-stabilizing ascending chain of principal ideals, together with the fact that $\langle a_n, q_n \mid n \in \nn \rangle$ is an atomic monoid, guarantees that $q_0 \in \langle a_n \mid n \in \nn \rangle$, which in turns implies that $q_0 \sim a_{k_1}$ for some $k_1 \in \nn$. Observe that $a_{k_1} \gg a_n$ for any $n > k_1$. Now suppose that we have found $k_1, \dots, k_m \in \nn$ with $k_1 < \dots < k_m$ such that $q_0 \sim a_{k_1} \gg \dots \gg a_{k_m} \gg a_n$ for all $n > k_m$. Then set $q'_0 := q_0 - \sum_{i=1}^m a_{k_i}$ and, for each $n \in \nn$, set $a'_n := \sum_{i=1}^n a_{k_m + i}$ and $q'_n = q'_0 - a'_n$. Observe that $(q'_n + M)_{n \in \nn}$ is a non-stabilizing ascending chain of principal ideals of~$M$. Proceeding as we did before, we find that $q'_0 \sim a_{k_m + j}$ for some $j \in \nn$. Since $q'_0 > \sum_{i=1}^n a_{k_m + i}$ for all $n \in \nn$, it follows that $a_{k_m + j} \gg a_n$ for every $n > k_m + j$. Then after setting $k_{m+1} := k_m + j$, we obtain $q_0 \sim a_{k_1} \gg \dots \gg a_{k_{m+1}} \gg a_n$ for all $n > k_{m+1}$. Hence, after replacing $(q_n)_{n \in \nn}$ by $(q_0 - \sum_{i=1}^n a_{k_i})_{n \in \nn}$, one can assume that $q_0 \sim a_1$ and also that $a_n \gg a_{n+1}$ for every $n \in \nn$. Now fix any $\ell \in \nn$. Since $q_\ell = q_{\ell +1} + a_{\ell +1}$, we see that $a_{\ell +1} = \textbf{O}(q_\ell)$. On the other hand, the atomicity of $\langle a_n, q_n \mid n \ge \ell +1 \rangle$ ensures that $q_\ell \in \langle a_n \mid n \ge \ell +1 \rangle$, and so $a_{\ell +1} \gg a_{\ell +n}$ for every $n \ge 2$ guarantees that $q_\ell \sim a_{\ell +1}$.
	
	Finally, write $q_0 = \sum_{i=1}^k c_i a_i$ for some $c_1, \dots, c_k \in \nn_0$. Note that $c_1 \ge 1$ because $q_0 \sim a_1 \gg a_n$ for every $n \ge 2$. Since $a_1 \gg a_2$, the fact that $(c_1 - 1)a_1 + \sum_{i=2}^k c_i a_i = q_0 - a_1 = q_1 \sim a_2$ implies that $c_1 = 1$. Set $j := \max\{ i \in \ldb 1, k \rdb \mid c_i = 1 \}$, and observe that $j < k$ as $q_0 - \sum_{i = 1}^k a_i = q_k \neq 0$. As $a_{j+1} \gg a_n$ for every $n \ge j+2$, the fact that $\sum_{i=j+1}^k c_i a_i = q_0 - \sum_{i=1}^j a_i = q_j \sim a_{j+1}$ guarantees the inequality $c_{j+1} \ge 1$. Finally, since $a_{j+1} \gg a_{j+2}$, the fact that $(c_{j+1} - 1)a_{j+1} + \sum_{i=j+2}^k c_i a_i = q_0 - \sum_{i=1}^{j+1} a_i = q_{j+1} \sim a_{j+2}$ implies that $c_{j+1} = 1$. However, this contradicts the maximality of $j$.
\end{proof}

With notation as in Theorem~\ref{thm:ACCP=HA}, the condition that the monoid is reduced is not superfluous. This is illustrated in the following example.

\begin{example}
	The additive abelian group $\zz^2$ trivially satisfies the ACCP as a monoid. To see that $\zz^2$ is not hereditarily atomic, let $M$ be the nonnegative cone of $\zz^2$ with respect to the lexicographical order with priority in the second coordinate. It is clear that $M$ is a submonoid of $\zz^2$, and it is not hard to verify that $\mathcal{A}(M) = \{(1,0)\}$. Thus, $M$ is not atomic, and so $\zz^2$ is not a hereditarily atomic monoid.
\end{example}

On the other hand, the condition of being reduced is not needed for the other implication of Theorem~\ref{thm:ACCP=HA}, as the following proposition shows.

\begin{prop}\footnote{Proposition~\ref{prop:HA implies ACCP} was kindly observed by Ben Li, who also suggested the given proof.} \label{prop:HA implies ACCP}
	If a torsion-free monoid is hereditarily atomic, then it satisfies the ACCP.
\end{prop}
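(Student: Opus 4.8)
The plan is to reduce the statement to Theorem~\ref{thm:ACCP=HA} by passing to the reduced monoid $M_{\text{red}} = M/\mathcal{U}(M)$. Two routine reductions do most of the work. First, $M$ satisfies the ACCP if and only if $M_{\text{red}}$ does: every principal ideal of $M$ is a union of cosets of $\mathcal{U}(M)$, so the projection $\pi \colon M \to M_{\text{red}}$ induces an inclusion-preserving bijection between ascending chains of principal ideals of $M$ and of $M_{\text{red}}$ that respects strict inclusions. Second, $M_{\text{red}}$ is hereditarily atomic: any submonoid $\bar N$ of $M_{\text{red}}$ equals $N_{\text{red}}$ for the submonoid $N := \pi^{-1}(\bar N)$ of $M$ (which satisfies $\mathcal{U}(N) = \mathcal{U}(M)$), and $N$ is atomic because $M$ is hereditarily atomic, whence $\bar N \cong N_{\text{red}}$ is atomic. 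Thus $M_{\text{red}}$ is reduced and hereditarily atomic, and if we also knew that $M_{\text{red}}$ is torsion-free, Theorem~\ref{thm:ACCP=HA} would immediately give that $M_{\text{red}}$, and hence $M$, satisfies the ACCP.

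So the crux is to show that $M_{\text{red}}$ is torsion-free, i.e., that $\gp(M)/\mathcal{U}(M)$ is torsion-free; I would argue by contraposition. Suppose $g + \mathcal{U}(M)$ is a nonzero torsion element of $\gp(M)/\mathcal{U}(M)$, so that $u := ng \in \mathcal{U}(M)$ for some $n \in \nn$ while $g \notin \mathcal{U}(M)$; write $g = m_1 - m_2$ with $m_1, m_2 \in M$, and note that $-u \in M$. Assume first that $m_1 \notin \qq u$ inside $\qq \otimes_{\zz} \gp(M)$, so that $u$ and $m_1$ are $\zz$-linearly independent and the submonoid $S := \langle u, \, m_1 - ku : k \in \nn_0 \rangle$ of $M$ has difference group $\gp(S) = \langle u, m_1 \rangle \cong \zz^2$. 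A short computation in the coordinates given by $u$ and $m_1$ shows that $S$ is reduced, that $\mathcal{A}(S) = \{u\}$ — each generator $m_1 - ku$ being non-atomic, as it factors $m_1 - ku = u + (m_1 - (k+1)u)$ into two nonzero noninvertible elements — and therefore that $m_1$ is a noninvertible element of $S$ that is not a sum of atoms; hence $S$, and so $M$, is not atomic, contradicting hereditary atomicity. Assume instead that $m_1 \in \qq u$; then $m_1, m_2, u$ all lie in the rank-one subgroup $G_1 := \gp(M) \cap \qq g$, so $M_1 := M \cap G_1$ is a submonoid of $M$ of rank at most one for which $g + \mathcal{U}(M_1)$ remains a nonzero torsion element of $\gp(M_1)/\mathcal{U}(M_1)$. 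But this is impossible for a rank-$\le 1$ torsion-free monoid: if $\mathcal{U}(M_1) \ne \{0\}$, then $\gp(M_1)/\mathcal{U}(M_1)$ is a quotient of a rank-one group by a nonzero (hence rank-one) subgroup, so it is a torsion group, and then the reduced monoid $(M_1)_{\text{red}}$ — being a submonoid of a torsion abelian group, hence itself a subgroup — is trivial, forcing $M_1 = \mathcal{U}(M_1)$ and so $\gp(M_1)/\mathcal{U}(M_1) = \{0\}$; while if $\mathcal{U}(M_1) = \{0\}$, then $\gp(M_1)/\mathcal{U}(M_1) = \gp(M_1)$ is torsion-free. In either case we have a contradiction, which proves the claim.

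Given the claim, the proof finishes: $M_{\text{red}}$ is reduced, torsion-free, and hereditarily atomic, so by Theorem~\ref{thm:ACCP=HA} it satisfies the ACCP, and therefore so does $M$. I expect the main obstacle to lie in the claim — concretely, in verifying that the auxiliary submonoid $S$ is reduced with $u$ as its only atom and in dispatching the rank-one sub-case — whereas the two opening reductions and the final deduction are essentially formal.
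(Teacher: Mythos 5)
Your proof is correct, but it takes a genuinely different route from the paper's. The paper does not pass to $M_{\text{red}}$ at all: given a non-stabilizing chain $(b_n + M)_{n \in \nn}$ with differences $c_n := b_n - b_{n+1} \notin \uu(M)$, it forms the submonoid $M' := \langle b_n, c_n \mid n \in \nn \rangle$, observes that $M'$ is torsion-free, hereditarily atomic, and fails the ACCP, concludes from Theorem~\ref{thm:ACCP=HA} that $M'$ cannot be reduced, and then notes that a nonzero invertible element of $M'$ forces one of the listed generators to be invertible in $M'$ --- necessarily some $b_m$, since no $c_n$ is --- so that $b_m \in \uu(M)$ and the chain stabilizes after all. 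Your argument instead applies Theorem~\ref{thm:ACCP=HA} directly to $M_{\text{red}}$, which requires the nontrivial intermediate claim that $\gp(M)/\uu(M)$ is torsion-free. Your two-case proof of that claim is sound: Case~1 reconstructs, inside $M$, precisely the lexicographic positive cone of $\zz^2$ that the paper itself uses as the counterexample showing $\zz^2$ is not hereditarily atomic, and Case~2 in fact shows that $M \cap \qq g$ is a group, hence that $g \in \uu(M)$, so that case cannot arise at all (note that both cases implicitly use $u \neq 0$, which holds because $\gp(M)$ is torsion-free and $g \neq 0$). The trade-off is that the paper's argument is shorter and sidesteps the torsion question entirely, while yours isolates a structural fact of independent interest --- hereditary atomicity of a torsion-free monoid forces its reduced monoid to be torsion-free --- which is exactly what is needed to remove the reduced hypothesis from Theorem~\ref{thm:ACCP=HA} by passage to $M_{\text{red}}$.
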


\begin{proof}
	 Let $M$ be a hereditarily atomic torsion-free monoid and assume, towards a contradiction, that $M$ does not satisfy the ACCP. 
	 Let $(b_n + M)_{n \in \nn}$ be an ascending chain of principal ideals of $M$ that does not stabilize. Set $c_n := b_n - b_{n+1}$ for every $n \in \nn$. After replacing $(b_n + M)_{n \in \nn}$ by a suitable subsequence, we can assume that $c_n \notin \uu(M)$ for any $n \in \nn$. 
	 Now consider the submonoid $M' := \langle b_n, c_n \mid n \in \nn \rangle$ of $M$. Since $M$ is a hereditarily atomic torsion-free monoid, so is~$M'$. Observe that $(b_n + M')_{n \in \nn}$ is an ascending chain of principal ideals of $M'$, which does not stabilize in $M'$ because $(b_n + M)_{n \in \nn}$ does not stabilize in $M$. Thus, $M'$ does not satisfy the ACCP and, therefore $M'$ is not reduced by virtue of Theorem~\ref{thm:ACCP=HA}. As a result, some element in the defining generating set of $M'$ must be invertible. Since $c_n \notin \uu(M')$ for any $n \in \nn$, there exists $m \in \nn$ such that $b_m \in \uu(M') \subseteq \uu(M)$. This implies that $b_n + M = M$ for every $n \ge m$, which contradicts that the chain of principal ideals $(b_n + M)_{n \in \nn}$ does not stabilize in $M$.
\end{proof}

The question of whether the torsion-free condition is superfluous in Proposition~\ref{prop:HA implies ACCP} is still unanswered.

\begin{question}
	Does every hereditarily atomic monoid satisfy the ACCP?
\end{question}

\bigskip
\section{Atomicity in Positive Monoids}
\label{sec:positive monoids}

In this section, we mostly consider totally ordered monoids whose sets of nonzero elements are not in a neighborhood of $0$. In terms of atomicity, this condition is quite strong for a positive monoid of the additive group of an Archimedean field: indeed, if $M$ is such a monoid and $0$ is not a limit point of $M^\bullet$ in the order topology, then $M$ is a BFM \cite[Proposition~4.5]{fG19}. The same result can be easily extended to any positive monoid.

\begin{prop} \label{prop:M is BF when 0 is not a limit point for Archimedean groups}
	Let $M$ be a positive monoid of an Archimedean ordered group $G$. If $0$ is not a limit point of $M^\bullet$ in $G$, then $M$ is a BFM.
\end{prop}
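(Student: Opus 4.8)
The plan is to pass, via H\"older's theorem, to the situation where $M$ is an additive submonoid of $\rr_{\ge 0}$ whose nonzero elements are bounded away from $0$, and then to produce a length function on $M$, so that the Halter--Koch criterion recorded above (\cite[Theorem~1]{fHK92}) applies.

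\emph{Reduction.} Since $G$ is Archimedean, H\"older's theorem furnishes an order isomorphism of $G$ onto a subgroup of $\rr$ carrying $M$ onto an additive submonoid of $\rr_{\ge 0}$; identifying $G$ with its image, we may assume $G \subseteq \rr$ and $M \subseteq \rr_{\ge 0}$. Since $0$ is not a limit point of $M^\bullet$, there is $\varepsilon \in \rr_{>0}$ with $M^\bullet \cap (0,\varepsilon) = \emptyset$; equivalently, $x \ge \varepsilon$ for every $x \in M^\bullet$. Note also that $M$, being a submonoid of $\rr_{\ge 0}$, is reduced, so that $\mathcal{U}(M) = \{0\}$.

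\emph{A length function.} Define $\ell \colon M \to \nn_0$ by $\ell(x) := \lfloor x/\varepsilon \rfloor$; this is well defined because $x/\varepsilon \ge 0$. For $x \in M$ one has $\ell(x) = 0$ if and only if $x < \varepsilon$, i.e.\ if and only if $x = 0$, i.e.\ if and only if $x \in \mathcal{U}(M)$; this is condition~(i) in the definition of a length function. For $b, c \in M$, the superadditivity of the floor function gives
\[
	\ell(b + c) = \left\lfloor \frac{b}{\varepsilon} + \frac{c}{\varepsilon} \right\rfloor \ge \left\lfloor \frac{b}{\varepsilon} \right\rfloor + \left\lfloor \frac{c}{\varepsilon} \right\rfloor = \ell(b) + \ell(c),
\]
which is condition~(ii). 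Hence $\ell$ is a length function on $M$, and so \cite[Theorem~1]{fHK92} shows that $M$ is a BFM.

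\emph{Alternative finish, and the main difficulty.} One could instead argue directly: every atom of $M$ is at least $\varepsilon$, so any factorization of a fixed $x \in M^\bullet$ into atoms has at most $x/\varepsilon$ summands, which bounds $\mathsf{L}(x)$; and atomicity follows by choosing, among the (boundedly many) expressions of $x$ as a sum of nonzero elements of $M$, one with the greatest number of summands, each of which is then necessarily an atom. The argument has no serious obstacle; the only point requiring a little care is the reduction step, namely verifying that the topological hypothesis "$0$ is not a limit point of $M^\bullet$" really translates, after the H\"older embedding, into a uniform positive lower bound $\varepsilon$ on $M^\bullet$. Once this is granted, constructing $\ell$ (or running the direct argument) completes the proof at once.
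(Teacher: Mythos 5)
Your proof is correct. The reduction is exactly the paper's: invoke H\"older's theorem to identify $G$ with a subgroup of $\rr$ and $M$ with a submonoid of $\rr_{\ge 0}$. At that point the paper simply cites \cite[Proposition~4.5]{fG19} and stops, whereas you prove the remaining claim from scratch: you extract a uniform lower bound $\varepsilon$ on $M^\bullet$ from the hypothesis (this translation is indeed the only point needing care, and it works because the order topology on a subgroup of $\rr$ agrees with the subspace topology, whether the subgroup is discrete or dense), and then exhibit the explicit length function $\ell(x) = \lfloor x/\varepsilon \rfloor$, so that Halter-Koch's criterion yields both atomicity and boundedness of lengths. This buys a self-contained argument at essentially no extra cost, and is very likely the content of the result the paper outsources. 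One small imprecision in your ``alternative finish'': the set of expressions of $x$ as a sum of nonzero elements of $M$ need not be finite (only the number of summands in each such expression is bounded by $x/\varepsilon$); what you actually use is that the set of achievable lengths is bounded, so an expression of maximal length exists and its summands are atoms. Since this remark is an aside and your main argument goes through the length function, it does not affect the validity of the proof.
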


\begin{proof}
	By virtue of H\"older's theorem, we can identify $G$ with an additive subgroup of $\rr$, in which case,~$M$ is an additive submonoid of $\rr$. Hence it follows from \cite[Proposition~4.5]{fG19} that $M$ is a BFM.
\end{proof}

When $M$ is a positive monoid of a totally ordered group that is not necessarily Archimedean, the fact that~$0$ is not a limit point of $M^\bullet$ is not a strong condition from the factorization-theoretical point of view. The following examples shed some light upon this observation.

\begin{example}
	Consider the abelian group $G := \qq \times \qq$ with the lexicographical order with priority in the first component; that is, $(x,y) \le (x', y')$ in $G$ provided that either $x < x'$ or $x=x'$ and $y \le y'$. Observe that $G^+ = (\{0\} \times \qq_{\ge 0}) \cup (\qq_{> 0} \times \qq)$.
	\begin{enumerate}
		\item \emph{A positive monoid of $G$ that is antimatter.} The monoid $M := \{(0,0)\} \cup (\qq_{> 0} \times \qq)$ of $G$ is a positive monoid of $G$, and $(0,0)$ is not a limit point of $M^\bullet$ with respect to the order topology of $G$. Also, $M$ satisfies that for each $b \in M^\bullet$, the element $\frac 12 b$ also belongs to $M^\bullet$. As a consequence,~$M$ is antimatter.
		\smallskip
		
		\item \emph{A positive monoid of $G$ that is neither antimatter nor atomic.} Now consider the positive monoid $M := G^+ \cap (\zz \times \zz) =  (\{0\} \times \nn_0) \cup (\nn \times \zz)$ of $G$. As in the previous example, $(0,0)$ is not a limit point of $M^\bullet$ with respect to the order topology of~$G$. Since $a := (0,1)$ is the minimum nonzero element of $M$, it must be an atom. On the other hand, each nonzero element of $M$ is divisible by $a$ and, therefore, $\mathcal{A}(M) = \{a\}$. However, $\langle a \rangle$ is a proper submonoid of $M$, which implies that $M$ is not atomic.
	\end{enumerate}
\end{example}

We proceed to construct an atomic positive monoid that is not strongly atomic.
		
\begin{example}  \label{ex:ATM not SAM}
	Fix $q \in \qq$ such that $q \in (0,1)$ and $q^{-1} \notin \nn$, and then set $M_q := \langle q^n \mid n \in \nn_0 \rangle$. It follows from \cite[Theorem~6.2]{GG18} that $M_q$ is an atomic monoid with $\mathcal{A}(M_q) = \{q^n \mid n \in \nn_0 \}$. Take distinct irrational numbers $\alpha, \beta \in \rr_{> 0}$ with $1 < \alpha < \beta$ such that the set $\{1,\alpha,\beta\}$ is linearly independent over~$\qq$. Now set~$S := \{s \in M_q \mid s < \alpha\}$. As $1 < \alpha$, the inclusion $\mathcal{A}(M_q) \subseteq S$ holds. Observe that~$S$ is a countable set. Let $\varphi \colon S \to \pp$ be an injective function. Now consider the monoid
	\[
		M_{\alpha, \beta} = \bigg\langle s, \frac{\alpha - s}{\varphi(s)}, \frac{\beta - s}{\varphi(s)} \ \bigg{|} \ s \in S \bigg\rangle.
	\]
	Because $\sup S = \alpha$ and $\alpha < \beta$, we see that $M_{\alpha, \beta}$ is a positive monoid of $\rr$. Note that $\alpha, \beta \in M_{\alpha, \beta}$ and also that $M_q$ is a submonoid of $M_{\alpha, \beta}$. Using the fact that the set $\{1,\alpha, \beta\}$ is linearly independent over~$\qq$, we can verify that no element of $S$ is divisible by neither $\frac{\alpha-s}{\varphi(s)}$ nor $\frac{\beta-s}{\varphi(s)}$ in~$M_{\alpha, \beta}$ for any $s \in S$. From this, we can deduce that every atom of $M_q$ is also an atom of $M_{\alpha, \beta}$; that is, $\{q^n \mid n \in \nn_0\} \subseteq \mathcal{A}(M_{\alpha, \beta})$. Therefore $\mathcal{A}(M_{\alpha, \beta}) \cap S = \{q^n \mid n \in \nn_0\}$. Once again we can use that $\{1,\alpha, \beta\}$ is linearly independent over $\qq$ to argue that $\frac{\alpha-s}{\varphi(s)}$ and $\frac{\beta-s}{\varphi(s)}$ are atoms of $M_{\alpha, \beta}$ for every $s \in S$. As a result,
	\[
		\mathcal{A}(M_{\alpha, \beta}) = \big\{ q^n \mid n \in \nn_0 \big\} \bigcup \bigg\{ \frac{\alpha-s}{\varphi(s)}, \frac{\beta-s}{\varphi(s)} \ \bigg{|} \ s \in S \bigg\}.
	\]
	It is clear now that every element of $M_{\alpha, \beta}$ can be expressed as a sum of atoms, which means that~$M_{\alpha, \beta}$ is an atomic monoid.
	
	Let us proceed to argue that~$M_{\alpha, \beta}$ is not strongly atomic. To do so, suppose that $d \in M$ is a common divisor of $\alpha$ and $\beta$ in~$M_{\alpha, \beta}$. It follows from the linearly independence of $\{1,\alpha, \beta\}$ that $\frac{\beta-s}{\varphi(s)} \nmid_{M_{\alpha,\beta}} \alpha$ for any $s \in S$. Similarly, $\frac{\alpha-s}{\varphi(s)} \nmid_{M_{\alpha,\beta}} \beta$ for any $s \in S$. Therefore $d \in M_q$, and so the inequality $d < \alpha$ ensures that $d \in S$. Now, after taking $k \in \nn$ sufficiently large so that $d + q^k \in S$, we see that
	\[
		\alpha-d = \varphi(d+q^k) \frac{\alpha-(d+q^k)}{\varphi(d+q^k)} + q^k \quad \text{ and } \quad \beta-d = \varphi(d+q^k) \frac{\beta-(d+q^k)}{\varphi(d+q^k)} + q^k,
	\]
	which implies that $q^k$ is a nonzero common divisor of both $\alpha-d$ and $\beta-d$ in $M_{\alpha, \beta}$. Therefore there is no common divisor $d$ of $\alpha$ and $\beta$ in $M_{\alpha, \beta}$ such that the only common divisor of $\alpha-d$ and $\beta-d$ is~$0$. Hence $M_{\alpha, \beta}$ is not strongly atomic.
		
	Now we consider the monoid $M := M_{\alpha, \beta} \times \nn_0$. Observe that $M$ is a positive monoid of the abelian group $G := \rr \times \zz$ endowed with the lexicographical order with priority in the first component. Also, we note that $(0,0)$ is not a limit point of $M^\bullet$ with respect to the order topology of~$G$. It is routine to check that the direct product of two atomic monoids is also an atomic monoid. As a consequence, $M$ is atomic. 
	Since~$M$ has a divisor-closed submonoid that is isomorphic to $M_{\alpha, \beta}$, namely $M_{\alpha, \beta} \times \{0\}$, we conclude that $M$ is not strongly atomic.
\end{example}

Now we exhibit a positive monoid that is strongly atomic but does not satisfy the ACCP.

\begin{example} \label{ex:SAM not ACCP}
	As in the previous example, for $q \in \qq \cap (0,1)$ with $q^{-1} \notin \nn$, consider the Puiseux monoid $M_q := \langle q^n \mid n \in \nn_0 \rangle$ whose set of atoms is $\mathcal{A}(M_q) = \{q^n \mid n \in \nn_0 \}$. Indeed, it follows from \cite[Example~3.8]{GL23} and \cite[Proposition~3.10(2)]{GL23} that $M_q$ is a strongly atomic monoid. Now observe that $\mathsf{d}(q) q^n = (\mathsf{d}(q) - \mathsf{n}(q)) q^n + \mathsf{d}(q) q^{n+1}$ for every $n \in \nn$, which implies that $\big( \mathsf{d}(q) q^n + M_q \big)_{n \ge 0}$ is an ascending chain of principal ideals of $M_q$ that does not stabilize. As a result, $M_q$ does not satisfy the ACCP. Following the lines of the last paragraph of Example~\ref{ex:ATM not SAM}, we can verify that $M := M_q \times \nn_0$ is a positive monoid of the totally ordered abelian group $\qq \times \zz$ (under the lexicographical order with priority in the first component) such that $M$ is strongly atomic, $M$ does not satisfy the ACCP, and $(0,0)$ is not a limit point of $M^\bullet$ with respect to the order topology of $\qq \times \zz$.
\end{example}

Lastly, we provide an example of a positive monoid that satisfies the ACCP but is not a BFM.
		
\begin{example} \label{ex:ACCP not BF}
 Consider the Puiseux monoid $M_0 := \big\langle \frac 1p \mid p \in \pp \big\rangle$ of $\qq$. It is known that~$M_0$ satisfies the ACCP (see \cite[Example~3.3]{AG22}). On the other hand, one can easily check that $\mathcal{A}(M_0) = \big\{\frac 1p \mid p \in \pp \big\}$. This implies that $\pp \subseteq \mathsf{L}_{M_0}(1)$ and, therefore, $M_0$ is not a BFM. Proceeding as in the previous two examples, we can see that $M := M_0 \times \nn_0$ is a positive monoid of the totally ordered group $\qq \times \zz$ (under the lexicographical order with priority in the first component) such that $(0,0)$ is not a limit point of $M^\bullet$ with respect to the order topology. Finally, from the fact that $M_0$ satisfies the ACCP but is not a BFM, we can deduce that $M$ satisfies the ACCP but is not a BFM.
\end{example}
\smallskip

We say that a positive monoid is \emph{increasing} if it can be generated by an increasing sequence. It is clear that if $M$ is an increasing monoid, then there is a neighborhood of $0$ disjoint from $M^\bullet$. Increasing positive monoids of the additive group of an ordered field are FFMs (see \cite[Theorem~5.6]{fG19}). The same statement holds for any increasing positive monoid.

\begin{theorem}
	Each increasing positive monoid of a totally ordered group is an FFM.
\end{theorem}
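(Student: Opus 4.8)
The plan is to deduce the statement from its ordered-field version \cite[Theorem~5.6]{fG19} by embedding the ambient totally ordered group into the additive group of an ordered field. So let $M$ be a positive monoid of a totally ordered abelian group $G$, and write $M = \langle a_n \mid n \in \nn \rangle$ for an increasing sequence $(a_n)_{n \in \nn}$ in $G^+$ (if this sequence is eventually constant, then $M$ is finitely generated, hence an FFM by \cite[Proposition~2.7.8]{GH06}, so we may assume it is strictly increasing). It then suffices to produce an order-embedding $\iota$ of $G$ into the additive group of an ordered field $F$: indeed, $\iota(M) = \langle \iota(a_n) \mid n \in \nn \rangle$ is a positive monoid of $F$ generated by the increasing sequence $(\iota(a_n))_{n \in \nn}$, so $\iota(M)$ is an FFM by \cite[Theorem~5.6]{fG19}, and therefore so is $M$, because being an FFM is an isomorphism invariant.

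To construct $F$, I would follow the route already used in the proof of Theorem~\ref{thm:ACCP=HA}. By Hahn's embedding theorem, $G$ embeds as an ordered group into $\mathfrak{F}(\Gamma_G, \rr)$, the totally ordered group of functions $\Gamma_G \to \rr$ vanishing outside a well-ordered set, where a nonzero function is positive exactly when it is positive at the least point of its support. The only reason $\mathfrak{F}(\Gamma_G, \rr)$ is not already the additive group of an ordered field is that $\Gamma_G$ is merely a totally ordered set, not a group; so the next step is to enlarge $\Gamma_G$ to a totally ordered abelian group. Concretely, let $\Delta$ be the free abelian group on the set $\Gamma_G$, ordered reverse-lexicographically: a nonzero element is declared positive when its coefficient at the greatest element of its (finite) support is positive. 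One checks routinely that this defines a positive cone on $\Delta$ and that the basis inclusion $\Gamma_G \hookrightarrow \Delta$ is an embedding of totally ordered sets.

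Now $\mathfrak{F}(\Gamma_G, \rr)$ embeds as an ordered group into $\mathfrak{F}(\Delta, \rr)$ by extending each function by $0$ on $\Delta \setminus \Gamma_G$: since $\Gamma_G \hookrightarrow \Delta$ is an order-embedding, well-ordered subsets of $\Gamma_G$ stay well-ordered in $\Delta$, and the sign of a function is read at the least point of its support, which is unchanged. Finally, $\mathfrak{F}(\Delta, \rr)$ with the convolution product is the Hahn series field $\rr((t^{\Delta}))$, an ordered field whose additive group and order coincide with those of $\mathfrak{F}(\Delta, \rr)$. Composing the three embeddings yields the desired order-embedding $\iota \colon G \hookrightarrow (\rr((t^{\Delta})), +)$, and the first paragraph then completes the proof.

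The routine verifications here — that the reverse-lexicographic order makes $\Delta$ a totally ordered group, that extension by zero preserves both well-orderedness of supports and the ordering, and that Hahn series over $\rr$ with exponents in $\Delta$ form an ordered field (Hahn's theorem, using Neumann's lemma on supports of products) — are all standard and constitute the only technical content; none of them is a real obstacle. The single essential ingredient is the classical fact that every totally ordered abelian group order-embeds into the additive group of an ordered field, after which the theorem is immediate from \cite[Theorem~5.6]{fG19}. As an alternative to invoking that theorem as a black box, one could re-run its combinatorial argument directly inside $G$, using the Archimedean valuation $v \colon G \setminus \{0\} \to \Gamma_G$ together with the identity $v(g+h) = \min\{v(g), v(h)\}$ for $g,h \in G^+$ to bound both the atoms dividing a fixed element and the lengths of its factorizations; but the embedding route is shorter and avoids duplicating that work.
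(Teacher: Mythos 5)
Your proof is correct, but it takes a genuinely different route from the paper's. The paper disposes of the theorem in one line: it observes that the proof of \cite[Theorem~5.6]{fG19} is purely additive--combinatorial and never invokes the multiplication of the ambient ordered field, so the same argument runs verbatim inside any totally ordered abelian group. You instead keep \cite[Theorem~5.6]{fG19} as a black box and reduce to it by order-embedding the ambient group $G$ into the additive group of an ordered field: Hahn's embedding theorem puts $G$ inside $\mathfrak{F}(\Gamma_G, \rr)$, the value set $\Gamma_G$ is enlarged to a totally ordered abelian group $\Delta$, and $\mathfrak{F}(\Delta,\rr)$ becomes the Hahn series field $\rr((t^{\Delta}))$ under convolution. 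All the steps you label routine really are routine (your reverse-lexicographic order on the free abelian group on $\Gamma_G$ does make the basis inclusion order-preserving, extension by zero preserves well-orderedness of supports and the sign read at the least support point, and Neumann's lemma gives the ordered-field structure), and the transfer of ``increasing positive monoid generated by an increasing sequence'' along an order-embedding is immediate, so the argument is complete. What each approach buys: the paper's route is essentially free but forces the reader to open up the cited proof and verify that multiplication is never used; your route treats the cited theorem as a genuine black box at the cost of the full strength of Hahn's embedding theorem plus the Hahn series construction --- machinery the paper has in any case already deployed in the proof of Theorem~\ref{thm:ACCP=HA}, and whose essential content here is exactly the classical fact that every totally ordered abelian group order-embeds into the additive group of an ordered field. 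Your closing remark correctly identifies the paper's approach as the alternative.
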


\begin{proof}
	The proof of \cite[Theorem~5.6]{fG19} can be mimicked as it does not use the multiplicative structure of the ordered field.
\end{proof}

For a totally ordered group $G$ and a nonzero $a \in G^+$, we name the positive monoid $M_a := \{0\} \cup G_{\ge a}$ following the terminology in~\cite{BG20}.

\begin{definition}
	Let $G$ be a totally ordered group $G$, and take a nonzero $a \in G^+$. Then we call $M_a$ the positive monoid of $G$ \emph{conducted} by $a$ or simply a \emph{conductive positive monoid}.
\end{definition}

For the rest of this section, we restrict our attention to conductive positive monoids. A version of conductive positive monoids was considered in \cite{BG20} and a special class of them was more recently considered in~\cite{BCG21}.

It immediately follows from Proposition~\ref{prop:M is BF when 0 is not a limit point for Archimedean groups} that when $G$ is Archimedean, $M_a$ is a BFM for each nonzero $a \in G^+$. However, when $G$ is not Archimedean, $M_a$ may not be even atomic for some nonzero $a \in G^+$. In the next proposition, for any nontrivial totally ordered group $G$, we provide conditions equivalent to that of $M_a$ being atomic. First, let us take a look at the following motivating example.

\begin{example} \label{ex:BF in Z^2}
	Consider the totally ordered group $G = \zz \times \zz$ with the lexicographical order with priority in the first component, in which case, $G^+ = (\{0\} \times \nn_0) \cup (\nn \times \zz)$. Observe that $G \setminus \{0\}$ consists of two Archimedean classes, whose intersections with $G^+$ are $C_1 := \{0\} \times \nn$ and $C_2 := \nn \times \zz$. Fix a nonzero $a \in G^+$. If $a \in C_1$, then $M_a$ is not atomic by virtue of Proposition~\ref{prop:conductive positive monoids BF} as $v(a) \succ \min \Gamma_G$: indeed, $\langle [a,2a) \rangle^\bullet \subset C_1$ and $M_a = \langle [a,2a) \rangle \cup C_2$. If $a \in C_2$, then $v(a) = \min \Gamma_G$ and so $M_a$ is a BFM by Proposition~\ref{prop:conductive positive monoids BF}: indeed, in this case, the restriction of the function $M_a \to \nn_0$ given by $(m,n) \mapsto m$ is clearly a length function of $M_a$ (see \cite[Proposition~3.1]{AG22}).
\end{example}

With Example~\ref{ex:BF in Z^2} in mind, we can characterize the conductive positive monoids that are atomic in several ways.

\begin{prop} \label{prop:conductive positive monoids BF}
	Let $G$ be a nontrivial totally ordered group, and fix a nonzero $a \in G^+$. Then $\mathcal{A}(M_a) = [a,2a)$, and the following conditions are equivalent.
		\begin{enumerate}
			\item[(a)] $M_a$ is a BFM.
			\smallskip
			
			\item[(b)] $M_a$ satisfies the ACCP.
			\smallskip
			
			\item[(c)] $M_a$ is strongly atomic.
			\smallskip
			
			\item[(d)] $M_a$ is atomic.
			\smallskip
			
			\item[(e)] $M_a$ is nearly atomic.
			\smallskip
			
			\item[(f)] $M_a$ is almost atomic.
			\smallskip
			
			\item[(g)] $M_a$ is quasi-atomic.
			\smallskip
			
			\item[(h)] $M_a^\bullet \subset v(a)$. 
			\smallskip
			
			\item[(i)] $v(a) = \min \Gamma_G$.
	\end{enumerate}
\end{prop}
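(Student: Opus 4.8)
The plan is to prove $\mathcal{A}(M_a)=[a,2a)$ directly and then establish the nine-fold equivalence by closing the single cycle
\[
\text{(a)}\Rightarrow\text{(b)}\Rightarrow\text{(c)}\Rightarrow\text{(d)}\Rightarrow\text{(e)}\Rightarrow\text{(f)}\Rightarrow\text{(g)}\Rightarrow\text{(h)}\Rightarrow\text{(i)}\Rightarrow\text{(a)},
\]
in which the implications $\text{(a)}\Rightarrow\cdots\Rightarrow\text{(g)}$ are nothing but the general implications BFM $\Rightarrow$ ACCP $\Rightarrow$ SAM $\Rightarrow$ ATM $\Rightarrow$ NAM $\Rightarrow$ AAM $\Rightarrow$ QAM recorded with references around Figure~\ref{fig:full atomicity diagram}; so all the content lives in (g) $\Rightarrow$ (h), (h) $\Leftrightarrow$ (i), and (i) $\Rightarrow$ (a). For the atoms, I would first observe that $M_a$ is reduced (if $g,-g\in M_a$ with $g\ne 0$ then $g\ge a>0$ and $-g\ge a>0$, absurd), so $M_a^\bullet=G_{\ge a}$; then if $a\le g<2a$ and $g=b+c$ with $b,c\in M_a^\bullet$, the bounds $b,c\ge a$ force $g\ge 2a$, so $g$ is an atom, while any $g\in M_a$ with $g\ge 2a$ equals $a+(g-a)$ with both summands in $M_a^\bullet$ and so is not an atom. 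Hence $\mathcal{A}(M_a)=[a,2a)$.

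The engine for (g) $\Rightarrow$ (h) is a description of the atomic elements of $M_a$: a nonzero $g\in M_a$ is atomic if and only if $g\sim a$. A sum of $k$ atoms lies in $[ka,2ka)$, hence is $\textbf{O}(a)$ and therefore (since $a=\textbf{O}(g)$ for every $g\ge a$) is $\sim a$; conversely, if $g\ge a$ and $g=\textbf{O}(a)$ then $\ell:=\max\{n\in\nn:na\le g\}$ is well defined and $g=(\ell-1)a+\bigl(g-(\ell-1)a\bigr)$ writes $g$ as a sum of $\ell$ atoms because $a\le g-(\ell-1)a<2a$. Granting this, if (h) fails there is $c\in M_a^\bullet$ with $c\notin v(a)$, hence $c\gg a$, i.e.\ $c>na$ for all $n\in\nn$; then for every $b\in M_a$ we have $b+c\ge c>na$ for all $n$, so $b+c$ is a nonzero non-$\textbf{O}(a)$ element of $M_a$, hence not atomic, so $c$ fails to be quasi-atomic, whence $M_a$ is not quasi-atomic.

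The equivalence (h) $\Leftrightarrow$ (i) is a direct unwinding of the Archimedean valuation. If $v(a)=\min\Gamma_G$ and $g\in M_a^\bullet$ then $0<a\le g$ gives $a=\textbf{O}(g)$, hence $v(g)\preceq v(a)$, forcing $v(g)=v(a)$. Conversely, if $M_a^\bullet\subseteq v(a)$ and $g\in G^+\setminus\{0\}$, then either $g\ge a$, so $g\in M_a^\bullet\subseteq v(a)$, or $0<g<a$, so $g=\textbf{O}(a)$; either way $v(a)\preceq v(g)$, and since $v(h)=v(-h)$ for all $h$ this yields $v(a)=\min\Gamma_G$. Finally, for (i) $\Rightarrow$ (a): by (h) every $g\in M_a^\bullet$ satisfies $g\sim a$, so $\{n\in\nn:na\le g\}$ is nonempty and bounded and $\ell(g):=\max\{n\in\nn:na\le g\}$, with $\ell(0):=0$, defines $\ell\colon M_a\to\nn_0$ satisfying $\ell(u)=0\iff u\in\uu(M_a)$ and $\ell(b+c)\ge\ell(b)+\ell(c)$; thus $\ell$ is a length function and $M_a$ is a BFM by \cite[Theorem~1]{fHK92}.

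I expect the main obstacle to be the atomic-element description in the second paragraph: the crucial point is that adding any element of $M_a$ only makes a given element larger, so an element that is not Archimedean-equivalent to $a$ can never be turned into an atomic element, which is what collapses even quasi-atomicity onto condition (h). The only other thing requiring care is keeping the orientation of $\preceq$ on $\Gamma_G$ straight, so that ``$v(a)=\min\Gamma_G$'' correctly says that $a$ lies in the dominant Archimedean class --- exactly the hypothesis that makes the length function and the above collapse work.
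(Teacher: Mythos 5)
Your proposal is correct and follows essentially the same route as the paper: the atoms are identified the same way, the chain (a)$\Rightarrow\cdots\Rightarrow$(g) is dispatched by the general implications, and all the substance is placed in (g)$\Rightarrow$(h), (h)$\Leftrightarrow$(i), and (i)$\Rightarrow$(a), exactly as in the paper. The only (harmless) variations are that you invoke the general implication NAM$\Rightarrow$AAM where the paper gives a short ad hoc argument for (e)$\Rightarrow$(f) in $M_a$, and for (i)$\Rightarrow$(a) you package the bound $\ell a \le b \le Na$ into an explicit length function and cite \cite[Theorem~1]{fHK92}, whereas the paper extracts atomicity and boundedness of $\mathsf{L}(b)$ directly from a maximal-length decomposition.
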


\begin{proof}
	For each $x \in M_a$ with $x \ge 2a$, we see that $x-a \in M^\bullet_a$ and, therefore, $\mathcal{A}(M_a) \subseteq [a,2a)$. On the other hand, $2a$ is a lower bound for the set $G_{\ge a} + G_{\ge a}$, which implies that $\mathcal{A}(M_a) \subseteq [a,2a)$. Thus, $\mathcal{A}(M_a) = [a,2a)$. Let us show now that the conditions (a)--(i) above are equivalent.
	\smallskip
	
	(a) $\Rightarrow$ (b) $\Rightarrow$ (c) $\Rightarrow$ (d) $\Rightarrow$ (e): This is clear.
	\smallskip
	
	(e) $\Rightarrow$ (f): Suppose that $M$ is nearly atomic, and then take $b \in M$ such that $b+c$ is atomic for every nonzero $c \in M$. If $b=0$, then $M_a$ is atomic and, therefore, almost atomic. Suppose, on the other hand, that $b > 0$. In this case, $2b$ is an atomic element of $M$. In addition, for each $c \in M$, the element $2 b + c = b + (b + c)$ is an atomic element of $M$. As a consequence, $M$ is almost atomic.
	\smallskip
	
	(f) $\Rightarrow$ (g): This is clear.
	\smallskip
	
	(g) $\Rightarrow$ (h): Assume that $M_a$ is quasi-atomic. Take $b \in M_a^\bullet$. As $M_a$ is quasi-atomic, we can pick $c \in M_a$ such that $b+c = a_1 + \cdots + a_n$ for some $a_1, \dots, a_n \in [a,2a)$. Hence $b \le b+c = a_1 + \cdots + a_n < 2na$. This implies that $b = \textbf{O}(a)$, and it is clear that $a = \textbf{O}(b)$. Thus, $b \sim a$ for all nonzero $b \in M_a$, whence the inclusion $M_a^\bullet \subset v(a)$ holds.
	\smallskip
	
	(h) $\Rightarrow$ (i): Suppose that the inclusion $M_a^\bullet \subset v(a)$ holds. Fix a nonzero $g \in G$, and let us verify that $v(g) \succeq v(a)$. After replacing $g$ by $-g$ if necessary, we can assume that $g \in G^+$. If $g < a$, then $v(g) \succeq v(a)$ and, otherwise, $g \in M_a \subset v(a)$, which implies that $v(g) = v(a)$.
	\smallskip
	
	(i) $\Rightarrow$ (a): Assume that $v(a) \preceq v(g)$ for all nonzero $g \in G$. Fix $b \in M_a^\bullet$. The fact that $v(a) \preceq v(b)$ implies that $b = \textbf{O}(a)$ and, therefore, $b \le Na$ for some $N \in \nn$. Now suppose that $b = a_1 + \dots + a_\ell$ for some $a_1, \dots, a_\ell \in M_a^\bullet$. Then $\ell a \le  a_1 + \cdots + a_\ell = b$ and so $\ell a \le b \le N a$. Thus, $b$ cannot be written as a sum of more than $N$ elements of $M_a^\bullet$. Now assuming that $\ell$ is as large as it could possibly be, we obtain that the summands in the right-hand side of $b = a_1 + \dots + a_\ell$ are atoms of $M_a^\bullet$, which implies not only that $b$ is an atomic element of $M_a$ but also that $\mathsf{L}_{M_a}(b)$ is bounded. Hence $M_a$ is a BFM.
\end{proof}

No two of the conditions (a)--(d) in Proposition~\ref{prop:conductive positive monoids BF} are equivalent in the class of positive monoids. Indeed, we have seen that the positive monoid $M_0$ of Example~\ref{ex:ACCP not BF} satisfies the ACCP but is not a BFM, the positive monoid $M_q$ of Example~\ref{ex:SAM not ACCP} is strongly atomic but does not satisfy the ACCP, and the positive monoid $M_{\alpha, \beta}$ of Example~\ref{ex:ATM not SAM} is atomic but not strongly atomic. Now we proceed to provide examples to illustrate that, in the class of positive monoids, no two of the conditions (d)--(g) are equivalent. Using the idea in Example~\ref{ex:ATM not SAM}, we begin with an example of a positive monoid that is nearly atomic but not atomic.

\begin{example}
	Let $\alpha$ be a positive irrational number, and let $\varphi \colon \qq_{\ge 0} \to \pp$ be an injective function. Then consider the positive monoid $M$ of the totally ordered abelian group $\rr$ defined as follows:
	\[
		M := \Big\langle q, \frac{\alpha + q}{\varphi(q)} \ \Big{|} \ q \in \qq_{\ge 0} \Big\rangle. 
	\]
	From the irrationality of $\alpha$, one can readily argue that $\frac{\alpha + q}{\varphi(q)} \in \mathcal{A}(M)$ for all $q \in \qq_{\ge 0}$. We proceed to show that $M$ is nearly atomic. In order to do so, we first observe that $\alpha = \varphi(0) \frac{\alpha}{\varphi(0)} \in M$, and then we claim that $\alpha + r$ is an atomic element of $M$ for each $r \in M$. Take $r \in M$, and observe that we can write $r = q_0 + a_1 + \dots + a_n$ for some $q_0 \in \qq_{\ge 0}$ and $a_1, \dots, a_n \in \mathcal{A}(M)$. Then
	\[
		\alpha + r = (\alpha + q_0) + \sum_{k=1}^n a_k = \varphi(q_0) \frac{\alpha + q_0}{\varphi(q_0)} + \sum_{k=1}^n a_k,
	\]
	which illustrates that $\alpha + r$ is an atomic element of $M$. As a consequence, $M$ is nearly atomic. Finally, the irrationality of $\alpha$ guarantees that no element in $M \cap \qq_{> 0}$ can be written as a sum of atoms in $M$, whence $M$ is not atomic.
\end{example}

Let us construct now an almost atomic positive monoid that is not nearly atomic.

\begin{example} \label{ex:AA not atomic}
	Let $M_0$ be the monoid generated by the reciprocals of the prime numbers; that is, $M_0 = \big\langle \frac 1p \mid p \in \pp \big\rangle$, and then let $G$ be the
	difference group of $M_0$. Now consider the Puiseux monoid $M := M_0 \cup G_{\ge 1}$. We have mentioned in Example~\ref{ex:ACCP not BF} that $\mathcal{A}(M_0) = \big\{ \frac 1p \mid p \in \pp \big\}$. Thus, the fact that $\max \mathcal{A}(M_0) < 1$ ensures that no element of $G_{\ge 1}$ divides any atom of $M_0$ in $M$. As a result, $\{ \frac 1p \mid p \in \pp\} \subseteq \mathcal{A}(M)$. Moreover, because each element in $M_{>1}$ is divisible by an atom $\frac 1p$ in $M$ for a sufficiently large $p \in \pp$, it follows that $\mathcal{A}(M) = \{ \frac 1p \mid p \in \pp\}$. Since any element of $M$ can be written as a difference between two elements of $M_0$, any element of $M$ can be written as a difference between two sums of atoms of $M$. Thus, $M$ is almost atomic.

	On the other hand, we claim that $M$ is not nearly atomic. Suppose, for the sake of a contradiction, that there exists $q \in M$ such that $q + r$ is an atomic element in $M$ for all $r \in M^\bullet$. Let $P_q$ be the set of all primes dividing $\mathsf{d}(q)$. Since $P_q$ is a finite set, and the series $\sum_{n \in \nn} \frac 1{p_n}$ is divergent (here $p_n$ is the $n$-th prime number), we can find a nonempty finite subset $S$ of $\pp$ such that $S \cap P_q$ is empty and $\sum_{p \in S} \frac 1p > q + 2$. Now consider the element $r : = 1 + \prod_{p \in S} \frac 1p$. Observe that $r \in M^\bullet$ and $r < 2$. Because $q + r$ is atomic in $M$, there exist $p'_1, \dots, p'_k \in \pp$ not necessarily distinct such that $q+r = \sum_{j=1}^k\frac 1{p'_j}$. Now the fact that $S \cap P_q$ is empty implies that $S \subseteq \{p'_1, \dots, p'_k\}$, and so
	\[
		q+r = \sum_{j=1}^k\frac 1{p'_j} \ge \sum_{p \in S} \frac 1p > q+2,
	\]
	which contradicts that $r < 2$. As a consequence, we conclude that $M$ is an almost atomic monoid that is not nearly atomic.
\end{example}

Lastly, we construct a quasi-atomic positive monoid that is not almost atomic.

\begin{example} \label{ex:quasi-atomic not almost atomic}
	Let $M$ be the additive submonoid of $\qq_{\ge 0}$ generated by the set $S := \zz[\frac 12]_{\ge 0} \cup \zz[\frac 13]_{\ge 4/3}$, which is a positive monoid of $\qq$. It is clear that $\frac 43 \in \mathcal{A}(M)$. Hence each element in $4\nn$ is an atomic element in $M$. Now for each $q \in M^\bullet$, we can take the element $b := (4\mathsf{d}(q)-1)q$, and conclude that $b + q = 4\mathsf{n}(q)$ is an atomic element in $M$. Hence~$M$ is quasi-atomic.
	
	To argue that $M$ is not almost atomic, we first verify that $\mathsf{d}(q)$ is a power of $3$ for every $q \in \mathcal{A}(M)$. To do so, let $q$ be an atom of $M$. Observe that $6 \nmid \mathsf{d}(q)$ as, if this were not the case, an element in the defining generating set $S$ of $M$ would have its denominator divisible by~$6$. In addition, observe that $\mathsf{d}(q)$ cannot be a power of $2$ as, otherwise, $\frac 12 q$ would divide $q$ in~$M$. Hence the denominator of each atom of $M$ is a power of $3$. As a consequence, $\mathcal{A}(M) \subseteq \zz[\frac 13]_{\ge 4/3}$, which implies that the difference group $G$ of $\langle \mathcal{A}(M) \rangle$ is contained in $\zz[\frac 13]$. Therefore the fact that $\frac 12 \in M \setminus G$ guarantees that $M$ is not almost atomic.
\end{example}

We proceed to characterize the conductive positive monoids that are FFMs.

\begin{prop} \label{prop:conductive positive monoids FF}
	Let $G$ be a nontrivial totally ordered group, and fix a nonzero $a \in G^+$. Then the following conditions are equivalent.
	\begin{enumerate}
		\item[(a)] $M_a$ is an FFM. 
		\smallskip
		
		\item[(b)] $G$ is a cyclic group.
	\end{enumerate}
\end{prop}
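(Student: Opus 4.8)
The plan is to prove both implications fairly directly, leaning on Proposition~\ref{prop:conductive positive monoids BF} — which already records that $\mathcal{A}(M_a) = [a,2a)$ and that $M_a$ is atomic precisely when $v(a) = \min \Gamma_G$ — together with the standard fact that every finitely generated monoid is an FFM (\cite[Proposition~2.7.8]{GH06}).

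\textbf{(b) $\Rightarrow$ (a).} If $G$ is nontrivial and cyclic, then $G \cong \zz$ as a totally ordered group, so after identifying $G^+$ with $\nn_0$ we may write $a = n$ for some $n \in \nn$, and then $M_a = \{0\} \cup G_{\ge a}$ is carried to the submonoid $\{0\} \cup \zz_{\ge n}$ of $\nn_0$. This submonoid is generated by the finite set $\ldb n, 2n-1 \rdb$ (inductively, any integer $m \ge 2n$ equals $n + (m-n)$ with $n \le m-n < m$), hence it is finitely generated and therefore an FFM.

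\textbf{(a) $\Rightarrow$ (b).} Assume $M_a$ is an FFM; the goal is to force $G$ to be cyclic. The crucial observation concerns the element $3a \in M_a$: whenever $b \in G$ satisfies $a < b < 2a$, both $b$ and $3a-b$ lie in $(a,2a) \subseteq [a,2a) = \mathcal{A}(M_a)$, so $b + (3a-b)$ is a factorization of $3a$ of length $2$. Since each unordered pair $\{b, 3a-b\}$ arises from at most two values of $b$, it follows that if $[a,2a)$ is an infinite set then $3a$ has infinitely many factorizations, contradicting the FFM hypothesis; hence $[a,2a)$ is finite. Now, since an FFM is atomic, Proposition~\ref{prop:conductive positive monoids BF} gives $v(a) = \min \Gamma_G$. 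If $\Gamma_G$ had more than one element, there would be an Archimedean class strictly above $v(a)$, that is, an element $h \in G^+ \setminus \{0\}$ with $nh < a$ for every $n \in \nn$; but then $\{a + kh : k \in \nn_0\}$ would be an infinite subset of $[a,2a)$, a contradiction. So $\Gamma_G$ is a singleton and $G$ is Archimedean. By H\"older's theorem we may then regard $G$ as an additive subgroup of $\rr$ with $a > 0$; as every subgroup of $\rr$ is either cyclic or dense, and a dense subgroup would once more make $[a,2a) \cap G$ infinite, we conclude that $G$ is cyclic.

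The step I expect to require the most care is ruling out the non-Archimedean case: finiteness of $[a,2a)$ by itself is not enough — for instance, in $\zz^2$ under a lexicographic order with an unfavourable choice of $a$, the interval $[a,2a)$ can be a singleton while $G$ is far from Archimedean (and $M_a$ then fails to be an FFM only because it is not atomic) — so it is essential first to invoke atomicity to learn that $v(a) = \min \Gamma_G$, after which the infinitesimal-element argument closes the gap. The remaining ingredients — the factorization count for $3a$, the cyclic-versus-dense dichotomy for subgroups of $\rr$, and the finite generation in the converse direction — are routine.
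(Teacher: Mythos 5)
Your proof is correct and follows essentially the same route as the paper's: the length-two factorizations of $3a$ force $[a,2a)$ to be finite, atomicity plus Proposition~\ref{prop:conductive positive monoids BF} rules out any Archimedean class above $v(a)$, and H\"older's theorem together with the cyclic-or-dense dichotomy for subgroups of $\rr$ finishes the forward direction, while the converse is the same finite-generation observation. The only differences are cosmetic: the paper proves the cyclic-or-dense dichotomy from scratch (via the rank of $G$) where you cite it as standard, and your handling of the endpoint $b=a$ and of unordered pairs in the count of factorizations of $3a$ is in fact slightly more careful than the paper's.
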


\begin{proof}
	(a) $\Rightarrow$ (b): 	Suppose first that $M_a$ is an FFM. Then $M_a$ is atomic, and so it follows from Proposition~\ref{prop:conductive positive monoids BF} that $\mathcal{A}(M_a) = [a,2a)$ and $v(a) = \min \Gamma_G$. Now observe that for each $b \in [a,2a)$, the element $3a - b$ belongs to $[a, 2a)$ and, therefore, $b + (3a - b)$ is a factorization of $3a$ in~$M_a$. As a consequence, the fact that $|\mathsf{Z}_{M_a}(3a)| < \infty$ guarantees that $|[a,2a)| < \infty$. Now observe that if there existed $g \in G \setminus \{0\}$ such that $v(a) \prec v(g)$, then $a + (G^+ \cap v(g)) \subseteq (a,2a)$, which is not possible because $(a,2a)$ is finite and $G^+ \cap v(g)$ contains the infinite set $\nn |g|$. Hence $|\Gamma_G| = 1$, which means that $G$ is Archimedean. By virtue of H\"older's theorem, we can assume that $G$ is a nontrivial additive subgroup of $\rr$. For the second part of the proof, we need the following claim.
	\smallskip
	
	\noindent \textit{Claim:} $G$ is dense in $\rr$ unless $G$ is cyclic.
	\smallskip
	
	\noindent \textit{Proof of Claim:} Suppose that $\text{rank} \, G \ge 2$, and take integrally independent  elements $g,h \in G$; that is, the elements $g$ and~$h$ are linearly independent over $\qq$. Then $h \neq 0$ and $\frac gh \notin \qq$. It is well known and not hard to verify that the set $\{m + n \frac gh \mid m,n \in \zz\}$ is dense in $\rr$. Hence $G$ is dense in $\rr$. On the other hand, suppose that $\text{rank} \, G = 1$. Fix a nonzero $g_0 \in G$, and then set $G_0 := \frac 1{g_0} G$. It is clear that $G_0$ is a copy of $G$ inside $\qq$. It is well known that every subgroup of $\qq$ is the union of a (possibly infinite) ascending sequence of cyclic groups (see \cite[Corollary~2.8]{rG84}). Therefore $G_0$ is either an infinite cyclic group or $0$ is a limit point of $G_0 \setminus \{0\}$, which implies that $G_0$ is dense in $\qq$. As a consequence, $G$ is either an infinite cyclic group or a dense subset of $\rr$, and the claim follows.
	\smallskip
	
	Now assume, for the sake of a contradiction, that $G$ is not cyclic. Then it follows from the established claim that $G$ is dense in $\rr$. However, this contradicts that the interval $[a,2a)$ of $G$ is finite. Hence $G$ must be a cyclic subgroup of $\rr$.
	\smallskip
	
	(b) $\Rightarrow$ (a): Finally, suppose that $G \cong \zz$. Since $M_a$ is a submonoid of~$G^+$ and $G^+ \cong \nn_0$, it follows that $M_a$ is isomorphic to a numerical monoid and, therefore, it is finitely generated. Hence $M_a$ is an FFM.
\end{proof}

We can use Propositions~\ref{prop:conductive positive monoids BF} and~\ref{prop:conductive positive monoids FF} in tandem to construct examples of BFMs that are not FFMs. To illustrate this, we revisit Example~\ref{ex:BF in Z^2}.

\begin{example} \label{ex:BFM not FFM}
	For the totally ordered group $G = \zz \times \zz$ with the lexicographical order with priority in the first component, we have seen in Example~\ref{ex:BF in Z^2} that $G \setminus \{0\}$ consists of two Archimedean classes and that $M_a$ is a BFM provided that $a \in G^+$ and $v(a) = \min \Gamma_G$. In addition, in light of Proposition~\ref{prop:conductive positive monoids FF}, the fact that $|\Gamma_G| = 2$ ensures that $M_a$ is not an FFM.
\end{example}

 In the following proposition, we characterize the conductive positive monoids that are LFMs.

\begin{prop} \label{prop:conductive positive monoids LF}
	Let $G$ be a nontrivial totally ordered group, and fix a nonzero $a \in G^+$. Then the following conditions are equivalent.
	\begin{enumerate}
		\item[(a)] $M_a$ is an LFM. 
		\smallskip
		
		\item[(b)] $G = \zz b$ for some nonzero $b \in G^+$, and $a \in \{b, 2b\}$.
	\end{enumerate}
\end{prop}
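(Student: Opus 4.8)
The plan is to prove both implications by reducing to an elementary computation with numerical monoids, using Propositions~\ref{prop:conductive positive monoids BF} and~\ref{prop:conductive positive monoids FF}.

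For (b) $\Rightarrow$ (a): choose $b$ to be the least positive element of $G$, so that $G^+ = \nn_0 b$. If $a = b$, then $M_a = \{0\} \cup G_{\ge b} = \nn_0 b$ is a free monoid of rank one, hence a UFM and a fortiori an LFM. If $a = 2b$, then $\mathcal{A}(M_a) = [2b, 4b) = \{2b, 3b\}$ by Proposition~\ref{prop:conductive positive monoids BF}, so the order isomorphism $G^+ \to \nn_0$ sending $kb \mapsto k$ identifies $M_a$ with the numerical monoid $\langle 2, 3 \rangle$. I would then verify length-factoriality of $\langle 2, 3 \rangle$ directly: if $2x + 3y = 2x' + 3y'$ and $x + y = x' + y'$, then subtracting twice the second equation from the first gives $y = y'$ and hence $x = x'$, so any two distinct factorizations of an element of $\langle 2,3\rangle$ necessarily have different lengths.

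For (a) $\Rightarrow$ (b): since every LFM is an FFM (see Figure~\ref{fig:full atomicity diagram}), Proposition~\ref{prop:conductive positive monoids FF} forces $G$ to be cyclic, say $G = \zz b$ with $b$ the least positive element of $G$. Then $a = nb$ for a unique $n \in \nn$, and the isomorphism $kb \mapsto k$ identifies $M_a$ with the numerical monoid $S_n := \{0\} \cup \nn_{\ge n}$, whose atoms are $n, n+1, \dots, 2n-1$ (Proposition~\ref{prop:conductive positive monoids BF}). It remains to rule out $n \ge 3$. Assume $n \ge 3$; then $n$, $n+1$, and $n+2$ are all atoms of $S_n$ (because $n + 2 \le 2n - 1$), and the element $2n + 2$ admits the two distinct factorizations $n + (n+2)$ and $(n+1) + (n+1)$, both of length $2$, contradicting length-factoriality. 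Hence $n \in \{1, 2\}$, that is, $a \in \{b, 2b\}$.

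I do not anticipate a serious obstacle: the argument is short. The only points requiring care are the reduction to cyclic $G$ (which is exactly Proposition~\ref{prop:conductive positive monoids FF} applied to the FFM property implied by being an LFM) and the choice of an equal-length factorization collision that exists precisely when $n \ge 3$; the element $2n + 2$ does the job, while the case $n = 2$ must be, and is, handled separately via the $\langle 2, 3 \rangle$ computation.
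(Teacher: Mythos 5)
Your proof is correct and follows essentially the same route as the paper: reduce (a)\,$\Rightarrow$\,(b) to the cyclic case via the LFM\,$\Rightarrow$\,FFM implication and Proposition~\ref{prop:conductive positive monoids FF}, then exhibit an equal-length factorization collision when $a \ge 3b$, and handle (b)\,$\Rightarrow$\,(a) by identifying $M_a$ with $\nn_0$ or $\langle 2,3\rangle$. The only differences are cosmetic: the paper uses the general collision $(m+n)c_2 b = m c_1 b + n c_3 b$ for three atoms $c_1b<c_2b<c_3b$ where you use the specific instance $n+(n+2)=(n+1)+(n+1)$, and it cites \cite{CS11} for length-factoriality of two-generated numerical monoids where you verify $\langle 2,3\rangle$ by hand.
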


\begin{proof}
	(a) $\Rightarrow$ (b): Suppose that $M_a$ is an LFM. Then $M_a$ is also an FFM and, therefore, the group $G$ is cyclic by virtue of Proposition~\ref{prop:conductive positive monoids FF}. Take a nonzero $b \in G$ such that $G = \zz b$. After replacing $b$ by $-b$ if necessary, we can assume that $b \in G^+$. Assume, by way of contradiction, that $a \ge 3b$. It follows from Proposition~\ref{prop:conductive positive monoids BF} that $\mathcal{A}(M_a) = \ldb a, 2a-b \rdb$, and the fact that $a \ge 3b$ guarantees that $|\mathcal{A}(M_a)| \ge 3$. Take $c_1, c_2, c_3 \in \nn_{\ge 3}$ such that $c_1 b, c_2b$, and $c_3 b$ are distinct atoms of $M_a$ and assume, without loss of generality, that $c_1 < c_2 < c_3$. Now take $m,n \in \nn$ such that $m(c_2 - c_1) = n(c_3 - c_2)$, and observe that $(m+n) c_2 b$ and $m c_1 b + n c_3 b$ yield two distinct factorizations of the same element of $M_a$ having the same length. This contradicts, however, that $M_a$ is an LFM. Hence $a \le 2b$ and, as $a$ is a nonzero element in the positive cone of $\zz b$, we can conclude that $a \in \{b, 2b\}$.
	\smallskip
	
	(b) $\Rightarrow$ (a): Suppose now that $G = \zz b$ for some nonzero $b \in G^+$, and $a \in \{b, 2b\}$. Since $G$ is the infinite cyclic group, $M_a$ is isomorphic to a submonoid of the additive monoid $\nn_0$. As $M_a$ is generated by at most two elements, it follows from \cite[Example~2.13]{CS11} that $M_a$ is an LFM.
\end{proof}

We conclude discussing the half-factorial property in the context of conductive positive monoids. Following Zaks~\cite{aZ80}, we say that an atomic monoid $M$ is \emph{half-factorial} (HFM) if any two factorizations of the same element have the same length. Half-factoriality has been systematically studied for almost four decades (see the recent paper~\cite{PS20} and references therein): a survey on the advances on length-factoriality until 2000 was given by Chapman and Coykendall in~\cite{CC00}. Observe that a monoid is a UFM if and only if it is both an HFM and an LFM. It turns out that for conductive positive monoids the condition of being a UFM and that of being an HFM are equivalent. In fact, we can characterize both properties as follows.

\begin{prop} \label{prop:conductive positive monoids UF/HF}
	Let $G$ be a nontrivial totally ordered group, and fix a nonzero $a \in G^+$. Then the following conditions are equivalent.
	\begin{enumerate}
		\item[(a)] $M_a$ is a UFM. 
		\smallskip
		
		\item[(b)] $M_a$ is an HFM.
		\smallskip
		
		\item[(c)] $G$ is cyclic, and $M_a = G^+$.
	\end{enumerate}
\end{prop}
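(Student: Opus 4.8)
The plan is to prove the three implications (c) $\Rightarrow$ (a) $\Rightarrow$ (b) $\Rightarrow$ (c), since (a) $\Rightarrow$ (b) is immediate from the definitions (every UFM is an HFM, as every UFM is an LFM). For (c) $\Rightarrow$ (a), assume $G$ is cyclic and $M_a = G^+$. Writing $G = \zz b$ with $b \in G^+$ nonzero, the hypothesis $M_a = G^+$ forces $a \le b$, hence $a = b$, and then $M_a = G^+ \cong \nn_0$ is a free monoid on one generator, which is trivially a UFM. So the first implication reduces to a short computation.

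The substantive direction is (b) $\Rightarrow$ (c). First I would observe that an HFM is in particular atomic, so Proposition~\ref{prop:conductive positive monoids BF} applies, giving $\mathcal{A}(M_a) = [a,2a)$ and $v(a) = \min \Gamma_G$. The strategy is to show that having more than one atom already breaks half-factoriality. Indeed, suppose $|[a,2a)| \ge 2$ and pick distinct atoms $x, y \in [a,2a)$. Then $x + y$, $2x$, and $2y$ all lie in the interval $[2a, 4a)$ and are sums of two atoms; the point is to produce \emph{some} element of $M_a$ with two factorizations of different lengths. The natural candidate is to find an element expressible both as a sum of two atoms and as a sum of three (or more) atoms. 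Since $v(a) = \min \Gamma_G$, every element of $M_a$ is $\textbf{O}(a)$, so $M_a$ is "Archimedean-like" over $a$ and there is ample room: for a suitable element $c$ of $M_a$ with $c \ge 3a$ but small enough, $c$ has a factorization of length $2$ (write $c = a_1 + a_2$ with $a_1 \in [a,2a)$ and $a_2 = c - a_1 \in [a, 2a)$, possible when $2a \le c < 4a$) and also a factorization using the atom $a$ together with a length-two factorization of $c - a$ (possible when $c - a \ge 2a$, i.e. $c \ge 3a$). Hence any $c \in [3a, 4a)$ — which is nonempty and properly contains more than one point once $G$ is non-cyclic or once $a$ is not the minimal positive element — has factorizations of lengths $2$ and $3$, contradicting half-factoriality. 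This forces $[a,2a)$ to be a single point, i.e. there is no element of $G$ strictly between $a$ and $2a$.

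From "$[a,2a) = \{a\}$" I would extract that $G$ is cyclic and $a$ is its minimal positive generator. First, $[a,2a) = \{a\}$ together with $v(a) = \min \Gamma_G$ rules out any $g \in G^\bullet$ with $v(a) \prec v(g)$, exactly as in the proof of Proposition~\ref{prop:conductive positive monoids FF} (otherwise $a + (G^+ \cap v(g)) \subseteq (a, 2a) = \varnothing$, impossible), so $G$ is Archimedean and we may view $G \subseteq \rr$ via Hölder. Then the gap condition $[a, 2a) = \{a\}$ says $G \cap (a, 2a) = \varnothing$, so $G$ is not dense in $\rr$; by the Claim in the proof of Proposition~\ref{prop:conductive positive monoids FF}, $G$ is cyclic. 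Write $G = \zz b$ with $b \in G^+$; the emptiness of $G \cap (a, 2a)$ forces $a = b$ (if $a = kb$ with $k \ge 2$ then $a + b \in (a, 2a) \cap G$). Therefore $M_a = \{0\} \cup G_{\ge b} = \zz_{\ge 0}\, b = G^+$, which is (c).

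The main obstacle I anticipate is the bookkeeping in the (b) $\Rightarrow$ (c) step: I must make sure the interval $[3a, 4a)$ genuinely contains a point of $G$ whenever $G$ is \emph{not} already cyclic with $a$ minimal — equivalently, whenever $[a,2a) \ne \{a\}$ — so that the length-$2$ versus length-$3$ contradiction actually fires. The clean way to handle this is to argue the contrapositive structurally: assume $[a,2a) \supsetneq \{a\}$, fix $x \in [a,2a)$ with $x > a$, and note $a + x \in (2a, 3a)$, so $a + x + a \in (3a, 4a)$ is a length-$3$ sum of atoms; meanwhile $2a + x \in [3a, 4a)$ as well, and one checks $2a + x$ and $a + x + a$ can be matched up — more directly, take the element $2x + a$: it equals $x + x + a$ (length $3$, all in $[a,2a)$) and also, since $2x + a \ge 2a + a > 2a$ and $2x + a < 4a$, it has a length-$2$ factorization $a_1 + a_2$ with $a_1 = 2x + a - b_0$ for an appropriate atom, provided $2x + a < 4a$, i.e. $x < 3a/2$; if instead $x \ge 3a/2$ one uses $x + a$ in place of $2x + a$. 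Sorting out these inequality cases — or, better, replacing them by a single uniform choice such as "the smallest element of $[3a,4a)$", whose existence follows from $v(a) = \min\Gamma_G$ after noting $M_a$ has no strictly decreasing sequences bounded below by $2a$... actually one should just invoke that $[2a,4a)$ is nonempty and pick any $c$ in it with $c \ge 3a$, which exists since $3a \in [3a,4a)$ — is the only delicate point, and it is entirely elementary once organized correctly.
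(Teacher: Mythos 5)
Your skeleton is right and the implications (a) $\Rightarrow$ (b) and (c) $\Rightarrow$ (a) are fine, but the core of (b) $\Rightarrow$ (c) --- producing an element with factorizations of two different lengths whenever $(a,2a)\cap G\neq\emptyset$ --- is not established by what you wrote, because several of your intermediate claims are false. First, it is not true that every $c\in[2a,4a)$ (or even every $c\in[3a,4a)$) has a length-two factorization: for that you need an atom $a_1$ with $c-a_1$ again in $[a,2a)$, i.e.\ a point of $G$ in the interval $(c-2a,2a)$, and such a point need not exist. Concretely, with $G=\zz$ and $a=2$ the atoms are $\{2,3\}$ and $c=7\in[3a,4a)$ admits only the length-three factorization $2+2+3$. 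Second, your fallback element $x+a$ for the case $x\ge 3a/2$ can never witness a length discrepancy: since $x+a<3a$ every factorization of it has length at most $2$, and since $x+a\ge 2a$ it is not an atom, so every factorization of it has length exactly $2$. Third, $2x+a$ need not have a length-two factorization either (take $G=\zz$, $a=3$, $x=4$: then $2x+a=11$ and the atoms are $\{3,4,5\}$, whose pairwise sums only reach $10$). The one element that always works is the one your final parenthetical gestures at but never actually exploits: $c=3a$ itself. Given $x\in(a,2a)\cap G$, both $x$ and $3a-x$ lie in $(a,2a)$ and hence are atoms by Proposition~\ref{prop:conductive positive monoids BF}, so the identity $a+a+a=x+(3a-x)$ exhibits factorizations of $3a$ of lengths $3$ and $2$, contradicting half-factoriality. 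That single identity is the paper's entire argument for this step, and it is what should replace your case analysis.

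The remainder of your (b) $\Rightarrow$ (c) (ruling out higher Archimedean classes, invoking H\"older and the density claim from Proposition~\ref{prop:conductive positive monoids FF}, and then deducing $a=b$) is correct, but it is heavier than necessary. Once $(a,2a)\cap G=\emptyset$ you know $\mathcal{A}(M_a)=\{a\}$; if some $c\in G^+$ satisfied $0<c<a$, then $2a-c$ would lie in the empty set $(a,2a)\cap G$, so $a$ is the least positive element of $G$ and $M_a=G^+$; and since $M_a$ is atomic with its only atom equal to $a$, every element of $M_a$ is a multiple of $a$, whence $G^+=\nn_0 a$ and $G=\zz a$. This is the paper's direct finish and avoids any appeal to H\"older's theorem or to density.
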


\begin{proof}
	(a) $\Rightarrow$ (b): This follows from the definitions.
	\smallskip
	
	(b) $\Rightarrow$(c): Assume that $M_a$ is an HFM. We claim that the open interval $(a,2a)$ is empty. Suppose, by way of contradiction, that this is not the case, and take $b \in (a,2a)$. Since $3a - b \in (a,2a)$, it follows from Proposition~\ref{prop:conductive positive monoids BF} that $3a-b \in \mathcal{A}(M_a)$. Since $a, b \in \mathcal{A}(M_a)$ by the same proposition, the equality $a + a + a = b + (3a-b)$ yields two factorizations of the same element with different lengths, which contradicts that $M_a$ is an HFM. Thus, $(a,2a)$ is empty, as we claimed. Then it follows from Proposition~\ref{prop:conductive positive monoids BF} that $\mathcal{A}(M_a) = \{a\}$. Now if $c \in G^+ \cap [0,a)$, then the fact that $2a - c \in (a,2a]$ ensures that $c=0$. Therefore $a$ must be the smallest nonzero element of $G^+$, which implies that $M_a = G^+$. Moreover, since $M_a$ is atomic, every element of $M_a$ must be the sum of copies of~$a$, which is the only atom of $M_a$. As a consequence, $G^+ = M_a = \nn_0 a$, and so $G = \zz a$ is the cyclic group.
	\smallskip
	
	(c) $\Rightarrow$ (a): If $G$ is cyclic and $M_a = G^+$, then $M_a \cong \nn_0$, whence $M_a$ is a UFM.
\end{proof}

In the direction of Proposition~\ref{prop:conductive positive monoids UF/HF}, it is worth mentioning that the property of being a UFM and that of being an HFM are also equivalent in the class consisting of all rank-one positive monoids. Indeed, every positive monoid is torsion-free, and it follows from~\cite[Theorem~3.12]{GGT21} that every rank-one torsion-free monoid is either a group or a Puiseux monoid (up to isomorphism). In addition, it was proved in \cite[Proposition~4.3]{fG20} that a Puiseux monoid is an HFM if and only if it is a UFM. Therefore we obtain the following remark.

\begin{remark}
	A rank-one positive monoid is a UFM if and only if it is an HFM.
\end{remark}

For higher ranks, however, there are positive monoids that are HFM but not UFM. Indeed, in the following example we exhibit a rank-two positive monoid that is an HFM but is not even an FFM.

\begin{example}
	Consider the totally ordered group $G = \zz \times \zz$ with the lexicographical order with priority in the first component, in which case, $G^+ = (\{0\} \times \nn_0) \cup (\nn \times \zz)$. Now consider the monoid $M := \{(0,0)\} \cup (\nn \times \zz)$, which is a positive monoid of $G$. We first observe that $\mathcal{A}(M) = \{(1,n) \mid n \in \zz\}$ and, therefore, $M$ is atomic. In addition, we can see that each factorization of an element $(m,n) \in M^\bullet$ consists of precisely $m$ atoms (counting repetitions). Hence $M$ is an HFM. On the other hand, since $(2,0) = (1,-n) + (1,n)$ for every $n \in \nn$, the element $(2,0)$ has infinitely many factorizations in $M$ (indeed, one can similarly see that every nonzero element of $M$ that is not an atom has infinitely many factorizations). As a consequence, $M$ is not an FFM.
\end{example}

We conclude this paper considering the atomic diagram in Figure~\ref{fig:full atomicity diagram} restricted to the class of conductive positive monoids. The implications in the diagram shown in Figure~\ref{fig:atomicity conductive PM} summarize the main results we have established in this section.

\begin{figure}[h]
	\begin{tikzcd}[cramped]
		\textbf{[ UFM } \ \arrow[r, Leftrightarrow] & \ \textbf{ HFM ]} \ \arrow[r, Rightarrow] & \ \textbf{ LFM } \arrow[r, Rightarrow] &  \textbf{ FFM } \arrow[r, Rightarrow] \ & \ \textbf{[ BFM } \ \arrow[r, Leftrightarrow] & \ \textbf{ QAM ]}
	\end{tikzcd}
	\caption{Adaptation of the chain of implications shown in Figure~\ref{fig:full atomicity diagram} for the class of conductive positive monoids. None of the three implication arrows is reversible in this class. The atomic properties between being a BFM and being a quasi-atomic monoid in Proposition~\ref{prop:conductive positive monoids BF} have been omitted for simplicity.}
	\label{fig:atomicity conductive PM}
\end{figure}

\bigskip
\section*{Acknowledgments}

During the preparation of the present paper both authors were part of the PRIMES program at MIT, and they would like to thank the directors and organizers of the program for making this collaboration possible. The first author kindly acknowledges support from the NSF under the awards DMS-1903069 and DMS-2213323.

\bigskip

\end{document}